\def\@tocline#1#2#3#4#5#6#7{\relax
  \ifnum #1>\c@tocdepth 
  \else
    \par \addpenalty\@secpenalty\addvspace{#2}%
    \begingroup \hyphenpenalty\@M
    \@ifempty{#4}{%
      \@tempdima\csname r@tocindent\number#1\endcsname\relax
    }{%
      \@tempdima#4\relax
    }%
    \parindent\z@ \leftskip#3\relax \advance\leftskip\@tempdima\relax
    \rightskip\@pnumwidth plus4em \parfillskip-\@pnumwidth
    #5\leavevmode\hskip-\@tempdima
      \ifcase #1
       \or\or \hskip 1em \or \hskip 2em \else \hskip 3em \fi%
      #6\nobreak\relax
    \dotfill\hbox to\@pnumwidth{\@tocpagenum{#7}}\par
    \nobreak
    \endgroup
  \fi}
\setlist[itemize]{noitemsep} 
\newtheorem{theorem}{Theorem}[section]
\newtheorem{corollary}{Corollary}[section]
\newtheorem{proposition}{Proposition}[section]
\newtheorem{lemma}{Lemma}[section]
\theoremstyle{definition}
\newtheorem{definition}{Definition}[section]
\newtheorem{example}{Example}[section]
\newtheorem{remark}{Remark}[section]
\newtheorem{conjecture}{Conjecture}[section]
\let\c@conjecture=\c@theorem
\let\c@corollary=\c@theorem
\let\c@proposition=\c@theorem
\let\c@lemma=\c@theorem
\let\c@definition=\c@theorem
\let\c@example=\c@theorem
\let\c@remark=\c@theorem
\let\c@equation\c@theorem
\let\c@question\c@theorem
\def\makeautorefname#1#2{\expandafter\def\csname#1autorefname\endcsname{#2}}
\newcommand{\ZZ}{\mathbb{Z}}
\newcommand{\RR}{\mathbb{R}}
\title{The Wrappingness and Trunkenness of Volume-Preserving Flows}
\date{March 2024}
\author{Peter Lambert-Cole}
\address{Department of Mathematics, University of Georgia, Athens, GA 30602}
\email{\href{plc@uga.edu}{plc@uga.edu}}
\begin{document}

\maketitle

\begin{abstract}
    Link invariants of long pieces of orbits of a volume-preserving flow can be used to define diffeomorphism invariants of the flow.  In this paper, we extend the notions of wrapping number and trunk and define invariants of links with respect to a fibration on a 3-manifold.  Extending work of Dehornoy and Rechtman, we apply this to define diffeomorphism invariants {\it wrappingness} and {\it trunkenness} of volume-preserving flows on 3-manifolds and interpret these invariants as obstructions to the existence of a global surface of section for the flow.  Finally, we construct flows and show that wrappingness and trunkenness are not functions of the helicity of a flow.
\end{abstract}

\section{Introduction}

\subsection{Motivation}

The motivation for this work is understanding surfaces of sections for volume-preserving flows on 3-manifolds.

\begin{definition}
    A {\it global surface of section} for a flow $\phi^t$ is a compact, embedded surface $\Sigma \subset Y$ such that
    \begin{enumerate}
        \item the flow $\phi^t$ is transverse to the interior of $\Sigma$,
        \item the boundary $\partial \Sigma$ is union of periodic orbits of the flow $\phi^t$, and
        \item for each point $p \in Y \setminus \partial \Sigma$, there exist $t_- < 0 < t_+$ such that $\phi^{t_-}(p)$ and $\phi^{t_+}(p)$ lie in the interior of $\Sigma$.
    \end{enumerate}
\end{definition}

One specific inspiration for this work is the following problem.  Let $(X,\omega)$ be a compact symplectic 4-manifold and $Y \subset X$ a hypersurface.  The hypersurface is {\it contact-type} if locally there exists a primitive $\alpha$ for $\omega$ such that $\alpha \wedge d \alpha$ is a volume-form on $Y$.  In this case, the form $\alpha$ is a contact form for some contact structure on $Y$.  More generally, one can ask if there exists a contact structure $\xi$ on $Y$ such that $\omega|_{\xi} > 0$, in which case we say that $\omega$ {\it dominates} $\xi$.  If $Y = \partial X$, then $X$ is a {\it strong filling} of $(Y,\xi)$ if $Y$ is contact-type and a {\it weak filling} if $\omega$ dominates $\xi$.

For a volume form $dvol_Y$, the restriction $\omega|_Y$ determines a volume-preserving flow $\Phi_R$ defined by integrating the unique vector field $R$ satisfying
\[dvol_Y(R,-) = \omega|_Y\]
The question of whether $Y$ is contact-type or admits a contact structure dominated by $\omega$ can be interpreted in terms of the dynamics of the flow $\Phi_R$.  Let $(Y,\xi)$ be a contact structure and $(B,\pi)$ an open book decomposition supporting this contact structure.  Suppose that $B$ is a union of periodic orbits of the flow $\Phi_R$.  A natural question is whether there exists an ambient isotopy of the fibration $\pi: Y \setminus B \rightarrow S^1$ until the flow of $\Phi_R$ is transverse to the fibration -- i.e. is $B$ the binding of a global surface of section for the flow of $\Phi_R$?  In this case, the Hamiltonian structure $\omega|_Y$ dominates the contact structure $(Y,\xi)$ and if $Y = \partial X$, then $(X,\omega)$ is a weak symplectic filling of $(Y,\xi)$.  Moreover, there exists an extension of $\omega$ onto a collar neighborhood $[0,1] \times Y$ of $\partial X$ such that $(X,\omega)$ is a strong symplectic filling of $(Y,\xi)$.  

The existence of a global surface of section is a diffeomorphism invariant of the dynamical system determined by the Hamiltonian structure $\omega|_Y$.  Let $\pi: Y \rightarrow S^1$ be a fibered 3-manifold (possibly with toroidal boundary obtained by removing tubular neighborhoods of some periodic orbits).  A necessary condition for the surface $\pi^{-1}(\theta)$ to be isotopic to a global surface of section is that each periodic orbit is, up to isotopy, braided with respect to this fibration.  More generally, any link invariant that measures the obstruction to being braided can be applied to long pieces of orbits in order to obstruct the existence of a global surface of section.  Generalizing Dehornoy and Rechtman \cite{Dehornoy-Rechtman}, we define the {\it wrappingness} and {\it trunkenness} of a vector field with respect to a fibration.  We interpret these invariants as measuring an obstruction to finding a global surface of section to the flow. 

\subsection{Trunk and wrapping number}

The {\it trunk} of a link $L$ in $S^3$ was defined by Ozawa \cite{Ozawa}.  It is the minimum, over all Morse functions $f$ on $S^3$ with exactly two critical points, of the maximal geometric intersection number of $L$ with a regular level set of $f$.  Note that a knot $K \subset S^3$ is trivial if and only if its trunk is 2.  

Let $L$ be a satellite knot in $S^3$ with pattern $P$ and companion $K$.  This means that $P$ is a knot in the (framed) solid torus $S^1 \times D^2$ and the knot $L$ is obtained by taking $S^3 \setminus \nu(K)$ and gluing in $(S^1 \times D^2,P)$ to recover a knot in $S^3$.  The {\it wrapping number} of the pattern $P$ is the minimal geometric intersection number of $P$ with a meridonal disk of $S^1 \times D^2$ (up to ambient isotopy).

We generalize these two notions as follows:

\begin{definition}
Let $\pi: Y \rightarrow S^1$ be a fibered 3-manifold (possibly with boundary) and let $\Pi$ be the set of all smooth fibrations $\pi':Y \rightarrow S^1$ isotopic to $\pi$.  Let $L$ be a link in $Y$.
\begin{enumerate}
\item the {\it winding number} of $L$ with respect to the fibration is the algebraic intersection number of $L$ with a fiber.
\item the {\it wrapping number} of $L$ with respect to the fibration is the minimum geometric intersection number of $L$ with a fiber surface,
\item the {\it trunk} of $L$ with respect to the fibration is the minimum, over all smooth fibrations $\pi$ in $\Pi$, of the maximum geometric intersection number of $L$ with a leaf $\pi^{-1}(\theta)$.
\end{enumerate}
\end{definition}

\begin{definition}
If $K \subset Y$ is a fibered link and $L$ is a link in the complement of $Y$, the {\it winding number of $L$ with respect to $K$}, the {\it wrapping number of $L$ with respect to $K$}, and the {\it trunk of $L$ with respect to $K$} are defined in terms of the fibration $\pi: Y \setminus K \rightarrow S^1$. 

A link $L$ in $Y$ is {\it semifibered} if it admits a fibered sublink $L_1$ such that the complementary sublink $L_2 = L \smallsetminus L_1$ has wrapping number 0 with respect to the fibration.
\end{definition}

The fibration $\pi$ determines a class $\pi^*(d \theta) \subset H^1(Y;\ZZ)$.  The linking number can be computed by taking $\pi^*(d \theta)$, restricting to $L$, then pairing with the fundamental class in $H_1(L;\ZZ)$.  The link $L \subset Y$ is {\it braided} with respect to the fibration $\pi: Y \rightarrow S^1$ if the restriction $\pi: L \rightarrow S^1$ is a covering map.  The following statements follow immediately from the definitions.

\begin{proposition}
Let $\pi: Y \rightarrow S^1$ be a fibered 3-manifold (possibly with boundary) and $L \subset Y$ a link.  Then
\begin{enumerate}
    \item There is a sequence of inequalities
    \[wind(L,\pi) \leq wrap(L,\pi) \leq trunk(L,\pi)\]
    \item the trunk of $L$ with respect to $\pi$ is equal to the winding number if and only if $L$ is ambient isotopic to a link that is braided with respect to the fibration $\pi$.
    \item we have
    \[wrap(L,\pi) = 0 \qquad \text{ and } \qquad trunk(L,\pi) = 2\]
    if and only if $L$ can be isotoped to lie in a page of the fibration.
\end{enumerate}
\end{proposition}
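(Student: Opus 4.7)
The plan is to verify each of the three items directly from the definitions, leveraging the standard inequalities that algebraic intersection is dominated by geometric intersection and that $\min \leq \max$. For (1), for any transverse representative of $L$ and any regular value $\theta$, the unsigned count $|L \cap \pi^{-1}(\theta)|$ dominates the algebraic count, which equals $wind(L,\pi)$; minimizing over representatives yields $wind(L,\pi) \leq wrap(L,\pi)$. Similarly, for any representative $L'$ and any $\pi' \in \Pi$, $\min_\theta |L' \cap \pi'^{-1}(\theta)| \leq \max_\theta |L' \cap \pi'^{-1}(\theta)|$, so taking the minimum of both sides over $L'$ and $\pi'$ gives $wrap(L,\pi) \leq trunk(L,\pi)$.

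For (2), the backward direction is immediate: if $L$ is braided with respect to $\pi$, then $\pi|_L : L \to S^1$ is a covering of degree $wind(L,\pi)$, so every fiber meets $L$ transversely in exactly that many points, giving $trunk(L,\pi) \leq wind(L,\pi)$, which combined with (1) is equality. Conversely, if $trunk = wind$, choose $\pi' \in \Pi$ with $\max_\theta |L \cap \pi'^{-1}(\theta)| = wind$; the universal lower bound $|L \cap \pi'^{-1}(\theta)| \geq wind$ then forces equality at every $\theta$. This simultaneously rules out oppositely-signed intersection points (since the geometric and algebraic counts now agree) and pins the geometric count as a constant function of $\theta$, so $\pi'|_L$ is an orientation-preserving submersion $L \to S^1$ without critical points, hence a covering. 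Thus $L$ is braided with respect to $\pi'$, and an ambient isotopy from $\pi'$ to $\pi$ transports $L$ to a braided representative for $\pi$.

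For (3), in the backward direction, starting from $L$ contained in the page $\pi^{-1}(\theta_0)$, push the $i$-th component $L_i$ off the page into a disjoint thin slab $\pi^{-1}((\theta_0 + i\epsilon, \theta_0 + (i+1)\epsilon))$ so that $\pi|_{L_i}$ is Morse with exactly one maximum and one minimum; heights outside all slabs give $wrap = 0$, each slab contributes at most $2$ intersections yielding $trunk \leq 2$, and the matching bound $trunk \geq 2$ follows from any Morse extremum on a nonempty component (using that $wind = 0$ rules out the braided degree-one case). For the forward direction, $wrap = 0$ isotopes $L$ into a copy of $\Sigma \times I$, the complement of a fiber, and $trunk = 2$ together with the parity of $|L_i \cap \pi^{-1}(\theta)|$ on each component forces the components to have pairwise disjoint height ranges, each with exactly one max and one min of the height function. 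The main obstacle is the final step of collapsing this structure into a single page: I would first isotope each $L_i$ within $\Sigma \times I$ so that its two monotonic arcs project to disjoint paths in $\Sigma$, apply the straight-line vertical isotopy $(x,t) \mapsto (x, (1-s)t + s\,t_i)$ to flatten each component to a level curve in $\Sigma \times \{t_i\}$, and then compress the resulting stacked level curves into the common fiber $\pi^{-1}(\theta_0)$ by an ambient isotopy within $\Sigma \times I$.
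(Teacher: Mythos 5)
The paper offers no argument for this proposition---it asserts that all three statements ``follow immediately from the definitions''---so the only question is whether your more detailed proof is sound. Parts (1) and (2) are correct: the integrality of the intersection counts guarantees the infimum over fibrations is attained, the constancy of the regular-level count at the value $wind(L,\pi)$ does rule out critical points of $\pi'|_L$ after a generic perturbation, and the backward direction of (3), via the parity observation that $|L\cap\pi^{-1}(\theta)| \equiv wind(L,\pi) \bmod 2$ at regular values, is the right way to get $trunk \geq 2$.

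The forward direction of (3), however, has a genuine gap at exactly the step you flag, and it cannot be closed by a cleverer isotopy. Once you have reduced to a component $L_i \subset \Sigma \times I$ that is $1$-bridge with respect to the $I$-coordinate, $L_i$ is the union of the graphs of two paths $a,b:[0,1]\to\Sigma$ with common endpoints, and any level-embedded curve it could be isotoped to must realize the free homotopy class $[a \ast b^{-1}] \in \pi_1(\Sigma)$ by an \emph{embedded} curve in the fiber. That class can be arbitrary. For instance, take $\Sigma = T^2$ and choose $a,b$ so that $[a\ast b^{-1}] = (2,0) \in \ZZ^2$ (or take $\Sigma$ a pair of pants and the class $x^2$ in the free group $\pi_1(\Sigma)$); a generic such pair satisfies $a(t)\neq b(t)$ for $t\in(0,1)$, so the union of the two graphs is an embedded knot $K$ with $wrap(K,\pi)=0$ and, by the parity argument, $trunk(K,\pi)=2$, yet $K$ is not freely homotopic---let alone isotopic---to any curve embedded in a page, since $(2,0)$ is not primitive and conjugation by the monodromy preserves divisibility. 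So ``isotope the two monotone arcs to have disjoint projections'' is not a preliminary simplification; it is the entire content of the claim, and it is obstructed by the free homotopy class of the component. Your flattening argument does go through when the page is a disk (the classical fact that $1$-bridge knots in a ball are unknotted and hence level), but for general fibers either an additional ingredient is needed or---as the example above indicates---the statement itself requires qualification as written.
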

In particular, the trunk of $L$ with respect to $\pi$ is an obstruction to $L$ being braided with respect to $\pi$.

The trunk of a disjoint of two links $L_1,L_2$ in $S^3$ satisfies the formula
\begin{equation}
\label{eq:trunk-additive}
   trunk(L_1 \cup L_2,S^3) = \text{max}(trunk(L_1,S^3),trunk(L_2,S^3) 
\end{equation}

Let $\pi_1: Y_1 \rightarrow S^1$ and $\pi_2: Y_2 \rightarrow S^2$ be fibered 3-manifolds with boundary and let $\pi: Y_1 \cup_{T^2} Y_2 \rightarrow S^1$ be their union along some fibered $T^2$-boundary component.  We conjecture the following generalization of Equation \ref{eq:trunk-additive}. 

\begin{conjecture}
\label{conj:fiber-sum}
Let $L_1 \subset Y_1$ and $L_2 \subset Y_2$ be two links and let $L_1 \cup L_2$ be their disjoint union in $Y_1 \cup Y_2$.  Then
\begin{enumerate}
    \item $wrap(L_1 \cup L_2,\pi) = wrap(L_1,\pi_1) + wrap(L_2,\pi_2)$,
    \item $trunk(L_1 \cup L_2,\pi) = \text{max}\left( trunk(L_1,\pi_1) + wrap(L_2,\pi_2), wrap(L_1,\pi_1) + trunk(L_2,\pi_2) \right)$
\end{enumerate}
\end{conjecture}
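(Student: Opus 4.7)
The plan is to prove each equality via matching upper and lower bounds, handling the wrapping number first, the trunk lower bound next, and the trunk upper bound last, since that last step is where I expect the main obstacle.

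For the wrapping number identity, the inequality $wrap(L_1 \cup L_2, \pi) \leq wrap(L_1, \pi_1) + wrap(L_2, \pi_2)$ comes from a gluing construction: pick fiber surfaces $F_i \subset Y_i$ of $\pi_i$ realizing $wrap(L_i, \pi_i)$ and, after a boundary isotopy, arrange $\partial F_1 = \partial F_2$ to coincide with a single fiber of the restricted fibration $\pi|_{T^2}$. The union $F := F_1 \cup_{T^2} F_2$ is then a fiber surface of $\pi$ meeting $L$ in exactly $wrap(L_1, \pi_1) + wrap(L_2, \pi_2)$ points. For the reverse inequality, pick a fiber surface $F$ of $\pi$ minimizing the geometric intersection with $L$, make it transverse to $T^2$, and use innermost-disk and annulus compression arguments to isotope $F$ rel $L$ until $F \cap T^2$ is a single fiber of $\pi|_{T^2}$. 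Then $F_i := F \cap Y_i$ is a fiber surface of $\pi_i$, so $|F \cap L_i| \geq wrap(L_i, \pi_i)$, and summing gives the other inequality.

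For the trunk, write $T_i := trunk(L_i, \pi_i)$ and $w_i := wrap(L_i, \pi_i)$. For the lower bound, consider any fibration $\pi'$ of $Y$ isotopic to $\pi$. After an ambient isotopy of $Y$ I may assume that $\pi'|_{T^2}$ is a fibration over $S^1$ and that the restrictions $\pi'_i := \pi'|_{Y_i}$ are fibrations of $Y_i$ isotopic to $\pi_i$. Set $A(\theta) = |(\pi'_1)^{-1}(\theta) \cap L_1|$ and $B(\theta) = |(\pi'_2)^{-1}(\theta) \cap L_2|$; each fiber of $\pi'_i$ is a fiber surface of $\pi_i$, so $A(\theta) \geq w_1$ and $B(\theta) \geq w_2$ for every $\theta$, and by definition of trunk $\max_\theta A(\theta) \geq T_1$ and $\max_\theta B(\theta) \geq T_2$. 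Choosing $\theta^*$ achieving the maximum of $A$ gives
\[
\max_\theta \bigl( A(\theta) + B(\theta) \bigr) \;\geq\; A(\theta^*) + B(\theta^*) \;\geq\; T_1 + w_2,
\]
and a symmetric argument yields $\geq w_1 + T_2$, so $trunk(L, \pi) \geq \max(T_1 + w_2, w_1 + T_2)$.

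The hard part is the matching upper bound. Assume without loss of generality $T_1 + w_2 \geq w_1 + T_2$; the goal is to build a fibration $\pi'$ with $\max_\theta(A + B) \leq T_1 + w_2$. I would pick $\pi'_1$ realizing $T_1$ on $Y_1$ and $\pi'_2$ realizing the minimum $w_2$ at some specific angle $\theta_0$ on $Y_2$, and then use a collar isotopy of $T^2$ to rotate the $S^1$-coordinate on the $Y_2$-side so that $\theta_0$ coincides with an angle $\theta^*$ maximizing $A$. The main obstacle is then controlling $A + B$ at all other angles: $B(\theta)$ can grow as large as $T_2$ as $\theta$ moves away from $\theta_0$, so we must ensure this growth is absorbed by a corresponding drop in $A(\theta)$. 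Establishing such an anti-correlation of $A$ and $B$ appears to require additional structural input, perhaps a surgery operation on fibrations in the spirit of Stallings' twist, or passage to a minimal-complexity representative of the isotopy class of fibrations. If no such coordinated construction exists in general, the conjecture may need to be weakened (for instance by replacing the upper bound with $T_1 + T_2$), and this is where I expect the most delicate analysis to lie.
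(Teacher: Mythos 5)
The statement you are trying to prove is stated in the paper as Conjecture \ref{conj:fiber-sum}; the paper offers no proof of it, only the special case of Proposition \ref{prop:trunk-embedding} (where $L$ lies in a $3$-ball, the pages are planar, and even there only an inequality is obtained for the trunk). So there is no complete argument in the paper to compare against, and your proposal does not supply one either: by your own admission the upper bound $trunk(L_1\cup L_2,\pi)\leq T_1+w_2$ is left open, and the ``anti-correlation'' of $A(\theta)$ and $B(\theta)$ that you identify as the missing ingredient is exactly the difficulty that keeps this a conjecture. Your honest flagging of that obstacle is to your credit, but it means the proposal is a proof sketch of the lower bound plus a statement of the problem, not a proof.

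There is also a genuine gap in your lower-bound argument that you should not gloss over. You assert that an arbitrary fibration $\pi'$ isotopic to $\pi$ can be arranged, ``after an ambient isotopy,'' to restrict to fibrations $\pi'_i$ of the $Y_i$ isotopic to $\pi_i$. But applying that ambient isotopy changes the intersection pattern with $L$; equivalently, if you instead transport $L$ by the isotopy and keep $\pi$ fixed, the image of $L_1$ need no longer lie in $Y_1$. What you actually need is the statement that isotoping $L_1$ through all of $Y$ (not merely through $Y_1$) cannot decrease $\max_\theta|\pi^{-1}(\theta)\cap L_1|$ below $trunk(L_1,\pi_1)$ --- i.e.\ that the trunk computed in the piece agrees with the trunk computed in the ambient fibered manifold. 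That is precisely the content of item (2) of Proposition \ref{prop:trunk-embedding}, which the paper proves only for links in a ball with planar pages via explicit embeddings into $S^2\times[0,1/2]\subset S^3$. A parallel issue affects your wrapping-number lower bound: the innermost-disk/annulus reduction of $F\cap T^2$ must be shown not to increase $|F\cap L|$ and to leave each $F\cap Y_i$ a genuine fiber surface of $\pi_i$, which requires control (e.g.\ incompressibility of the splitting torus) that you have not established. Until both the normalization step and the upper bound are addressed, the argument establishes neither equality.
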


\subsection{Wrappingness and Trunkenness of vector fields}

Dehornoy and Rechtman used the trunk to define an asymptotic invariant of volume-preserving flows on $S^3$.  If $X$ preserves a volume form $\mu$, then Poincare recurrence implies that $\mu$-almost every point is recurrent.  In particular, $\mu$-almost every flowline can be approximated by a sequence of knots; therefore invariants of the knots, such as signature, trunk or any Vassiliev invariant, give invariants of the flow.  While most of these invariants are simply functions of the helicity of $X$, the trunkenness of a flow is not determined by its helicity.

Analogous to the generalization of trunk and wrapping number of knots with respect to a fibration, we generalize the trunkneness of a vector field with respect to a fibration.  First, note that if the invariant measure $\mu$ arises from a smooth volume form $\Omega$, then
\begin{align*}
    [\pi] \cdot \mu &= \int_{\pi^{-1}(\theta)} \iota_X \Omega & Wrp(X,\mu,\pi) &= \underset{\pi \in \Pi}{\text{inf}} \, \, \underset{\theta \in S^1}{\text{min}} \int_{\pi^{-1}(\theta)} | \iota_X \Omega | \\
    && Tks(X,\mu,\pi) &= \underset{\pi \in \Pi}{\text{inf}} \, \, \underset{\theta \in S^1}{\text{max}} \int_{\pi^{-1}(\theta)} | \iota_X \Omega | 
\end{align*}
Moreover, since $\Omega$ is $X$-invariant, we have that
\[0 = \mathcal{L}_X \Omega = \iota_X d\Omega + d \iota_X \Omega = d \iota_X \Omega = 0\]
so that $\iota_X \Omega$ is closed and the linking number is equal to the $\iota_X \Omega$-area of a page $\pi^{-1}(\theta)$.  Since all such fiber surfaces are homologous, this integral is independent of the fibration $\pi$ within its isotopy class.

Let $X_L$ be a nonsingular vector field tangent to the link $L$ and let $\phi^t_L$ be the flow of $X_L$.  There is a Dirac linear measure associated to $X_L$.  Given a measurable set $A \subset Y$ and $x_1,\dots,x_n$ an arbitrary collection of point on $L$, one on each component.  Then
\[\mu_L(A) := \sum_{i = 1}^n \text{Leb}\left( t \in [0,T_i] : \phi^t_L(x_i) \in A \right) \]
This measure is $X_L$-invariant and has total mass $T_L = T_1 + \cdots + T_n$.  If $p \in L \pitchfork S$ is a transverse intersection point of $L$ with a surface $S$, then the set $\mu^{[0,\epsilon]}(p)$ has $\mu_L$-measure $\epsilon$.  Therefore, the geometric intersection number of $L$ with $S$ is given by the formula
\[\#(L \pitchfork S) = \underset{\epsilon \rightarrow 0}{\lim} \, \, \frac{1}{\epsilon} \mu \left(\phi^{[0,\epsilon]}(L \cap S)\right) = \text{Flux}(X_L,\mu_L,S)\]
since $\mu_L$ is concentrated on $L$.

\begin{definition}
Let $X$ be a vector field on $Y^3$, let $\phi_X$ denote the flow of $X$, and let $\mu$ be an $X$-invariant Borel probability measure on $Y$.  Let $\pi: Y \rightarrow S^1$ be a smooth fibration and $\Pi$ the set of all fibrations isotopic to $\pi$.

\begin{enumerate}
\item The {\it Winding number} of $\mu$ with respect to a fibration is the integral
\[[\beta] \cdot \mu = \int_Y \beta(X) d\mu\]
where $\beta$ is a closed 1-form representing $[\pi] \subset H^1(Y,\ZZ) \subset H^1(Y,\RR)$.

\item The {\it Wrappingness} of $\mu$ with respect to a fibration is
\[Wrp(X,\mu,\pi) = \underset{\pi \in \Pi}{\text{inf}} \, \, \underset{\theta \in S^1}{\text{min}} \, \, \text{Flux}(X,\mu,\pi^{-1}(\theta)) = \underset{\pi \in \Pi}{\text{inf}} \, \, \underset{\theta \in S^1}{\text{min}} \, \, \underset{\epsilon \rightarrow 0}{\text{lim}} \, \, \frac{1}{\epsilon}\mu \left(\phi_X^{[0,\epsilon]}(\pi^{-1}(\theta)\right)\]

\item The {\it Trunkenness} of $\mu$ with respect to the fibration $\pi$ is
\[Tks(X,\mu,\pi) = \underset{\pi \in \Pi}{\text{inf}} \, \, \underset{\theta \in S^1}{\text{max}} \, \, \text{Flux}(X,\mu,\pi^{-1}(\theta)) = \underset{\pi \in \Pi}{\text{inf}} \, \, \underset{\theta \in S^1}{\text{max}} \, \, \underset{\epsilon \rightarrow 0}{\text{lim}} \, \, \frac{1}{\epsilon}\mu \left(\phi_X^{[0,\epsilon]}(\pi^{-1}(\theta)\right)\]
\end{enumerate}
\end{definition}

\begin{definition}
Let $X$ be a vector field on $Y$, let $\Phi_X$ denote the flow of $X$.  Let $B$ be a fibered link that is the union of periodic orbits of $\Phi_X$ and let $\pi: Y \setminus B \rightarrow S^1$ be the fibration.  Suppose that $\rho^{\pi}(B) > 0$ on each component of the binding.  Let $\mu$ be an $X$-invariant Borel probability measure on $Y \setminus B$. 

\begin{enumerate}
\item The {\it Winding number} of $\mu$ with respect to $B$ is the integral
\[[\beta] \cdot \mu = \int_Y \beta(X) d\mu\]
where $\beta$ is a closed 1-form in $\Omega^1_B$ Poincare dual to a fiber surface and $\Omega^1_B$ consists of 1-forms that are bounded near $K$ (see Remark \ref{rmk:bounded} below).

\item The {\it Wrappingness} of $\mu$ with respect to $B$ is
\[Wrp(X,\mu,B) = \underset{\pi \in \Pi}{\text{inf}} \, \, \underset{\theta \in S^1}{\text{min}} \, \, \text{Flux}(X,\mu,\pi^{-1}(\theta)) = \underset{\pi \in \Pi}{\text{inf}} \, \, \underset{\theta \in S^1}{\text{min}} \, \, \underset{\epsilon \rightarrow 0}{\text{lim}} \, \, \frac{1}{\epsilon}\mu \left(\phi_X^{[0,\epsilon]}(\pi^{-1}(\theta)\right)\]

\item The {\it Trunkenness} of $\mu$ with respect to the fibration $\pi$ is
\[Tks(X,\mu,B) = \underset{\pi \in \Pi}{\text{inf}} \, \, \underset{\theta \in S^1}{\text{max}} \, \, \text{Flux}(X,\mu,\pi^{-1}(\theta)) = \underset{\pi \in \Pi}{\text{inf}} \, \, \underset{\theta \in S^1}{\text{max}} \, \, \underset{\epsilon \rightarrow 0}{\text{lim}} \, \, \frac{1}{\epsilon}\mu \left(\phi_X^{[0,\epsilon]}(\pi^{-1}(\theta)\right)\]
\end{enumerate}
\end{definition}

\begin{remark}
\label{rmk:bounded}
The well-definedness of the winding number is addressed in \cite[Section 2.1]{H-SFS}.  In the case where $K \subset Y$ is a collection of periodic orbits of the flow of $X$, the 1-form $\beta$ must be bounded near $K$.  In particular, one can choose tubular polar coordinates $(t,r,\theta)$ near each component of $K$, so that
\[\beta = A dt + B dr + C d\theta\]
The winding number integral is well-defined if the coefficient functions $A,B,C$ are bounded.  Moreover, this condition is independent of the chosen tubular polar coordinates.
\end{remark}

\subsection{Main results}

By adapting the techniques and arguments of Dehornoy-Rechtman, we obtain the following results regarding wrappingness and trunkenness of vector fields with respect to a fibration.  

\begin{theorem}
\label{thrm:homeo-invariance}
    Let $X_1,X_2$ be vector fields on $Y$ that preserve the probability measure $\mu$ and suppose there is a $\mu$-preserving homeomorphism $f$ isotopic to the identity that conjugates the flow of $X_1$ and $X_2$.  Then
    \begin{align*}
        Tks(X_1,\mu,\pi) &= Tks(X_2,\mu,\pi) \\
        Wrp(X_1,\mu,\pi) &= Wrp(X_2,\mu,\pi)
    \end{align*}

\noindent If $B_1,B_2$ are (unions) of periodic orbits of $X_1,X_2$, respectively, and $f(B_1) = B_2$, then
    \begin{align*}
        Tks(X_1,\mu,B_1) &= Tks(X_2,\mu,B_2) \\
        Wrp(X_1,\mu,B_1) &= Wrp(X_2,\mu,B_2)
    \end{align*}
\end{theorem}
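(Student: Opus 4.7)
The proof rests on a pullback identity that transports flux measurements between the two conjugate flows. For any Borel subset $S \subseteq Y$, the relations $f \circ \phi_{X_1}^t = \phi_{X_2}^t \circ f$ and $f_*\mu = \mu$ give
\[\mu\bigl(\phi_{X_1}^{[0,\epsilon]}(f^{-1}(S))\bigr) = \mu\bigl(f^{-1}(\phi_{X_2}^{[0,\epsilon]}(S))\bigr) = \mu\bigl(\phi_{X_2}^{[0,\epsilon]}(S)\bigr).\]
Dividing by $\epsilon$ and sending $\epsilon \to 0$ yields $\text{Flux}(X_1,\mu,f^{-1}(S)) = \text{Flux}(X_2,\mu,S)$ whenever either side exists.

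Applying this identity to the level set $S = (\pi')^{-1}(\theta)$ of an arbitrary smooth $\pi' \in \Pi$, the preimage $f^{-1}((\pi')^{-1}(\theta))$ is the level set of the continuous composition $\pi' \circ f$, which is isotopic to $\pi$ because $f$ is isotopic to the identity. Thus
\[\min_{\theta}\text{Flux}(X_1,\mu, f^{-1}((\pi')^{-1}(\theta))) = \min_{\theta}\text{Flux}(X_2,\mu,(\pi')^{-1}(\theta)),\]
and the analogous identity holds for the maximum. If $\pi' \circ f$ were smooth, it would realize an element of $\Pi$; taking $\inf_{\pi' \in \Pi}$ on both sides, together with the symmetric argument applied to $f^{-1}$, would give the claimed equalities for $Wrp$ and $Tks$ with respect to $\pi$.

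The central obstacle is that $f$ is only a homeomorphism, so $\pi' \circ f$ is continuous but generally not smooth, hence need not lie in $\Pi$. I would circumvent this by extending the definitions of $Wrp$ and $Tks$ to admit continuous fibrations in the isotopy class of $\pi$, and then showing the resulting infima agree with the original ones. The equality of infima is established by mollification: given a continuous fibration $\eta$, smooth its local coordinate representatives to produce $\eta_n \to \eta$ in $C^0$, with $\eta_n^{-1}(\theta) \subset \eta^{-1}([\theta-\delta_n,\theta+\delta_n])$ for some $\delta_n \to 0$. The remaining work is quantitative: one must show $\text{Flux}(X,\mu,\eta_n^{-1}(\theta)) \to \text{Flux}(X,\mu,\eta^{-1}(\theta))$ uniformly in $\theta$, so that the convergence passes through to the infima. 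This control of flux under $C^0$-small perturbations of the fibration — using continuity of the flow and the lack of atoms of $\mu$ on reasonable surfaces — is the principal technical point, and is where most of the care is required.

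For the statements involving the bindings $B_1$ and $B_2$, the hypothesis $f(B_1) = B_2$ forces $f$ to restrict to a $\mu$-preserving homeomorphism $Y \setminus B_1 \to Y \setminus B_2$ conjugating the restricted flows, and the isotopy of $f$ to the identity in $Y$ produces a compatible isotopy between the two complement fibrations. The pullback-and-approximate argument above then applies verbatim on the complements, with the additional check that the boundedness condition of Remark \ref{rmk:bounded} is preserved by mollification carried out in tubular polar coordinates about each binding component, so that the approximating smooth fibrations remain in the admissible class $\Omega^1_B$.
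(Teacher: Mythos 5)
Your proposal is correct and follows essentially the same route as the paper: the exact flux identity coming from conjugation plus $\mu$-invariance, followed by smoothing the continuous composite $\pi'\circ f$ into a genuine fibration in $\Pi$ while controlling the flux, which is precisely the approximation step the paper (following Dehornoy--Rechtman) asserts before running its $\delta/4$ bookkeeping. The technical point you isolate --- uniform convergence of flux under $C^0$-small perturbation of the fibration --- is exactly the step the paper leaves implicit, and your treatment of the binding case (restricting to complements and checking boundedness near $B$) matches what the paper leaves to the reader.
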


\begin{theorem}
\label{thrm:weak-star-convergense}
    Let $(X_n,\mu_n)$ be a sequence of measure-preserving vector fields such that $(X_n,\mu_n)$ converges to $(X,\mu)$ in the weak-$\ast$ sense.  Then we have that
    \begin{align*}
        \lim_{n \rightarrow \infty} Tks(X_n,\mu_n,\pi) & = Tks(X,\mu,\pi) \\
        \lim_{n \rightarrow \infty} Wrp(X_n,\mu_n,\pi) &= Wrp(X,\mu,\pi)
    \end{align*}
    \[\]
\end{theorem}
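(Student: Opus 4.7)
The plan is to first establish joint continuity of the flux $F(X,\mu,\pi',\theta) := \text{Flux}(X,\mu,\pi'^{-1}(\theta))$ in $(X,\mu)$, uniformly in $\theta \in S^1$, for each fixed smooth fibration $\pi' \in \Pi$, and then exploit the variational definitions of $Tks$ and $Wrp$.

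First, I would fix a smooth $\pi' \in \Pi$ whose fibers are transverse to $X$; by the $C^0$-convergence $X_n \to X$, the fibers are then transverse to every $X_n$ for $n$ sufficiently large. For small $\epsilon > 0$, the flow-boxes $V_n(\theta,\epsilon) := \phi_{X_n}^{[0,\epsilon]}(\pi'^{-1}(\theta))$ are embedded thickened slabs converging in Hausdorff distance to $V(\theta,\epsilon) := \phi_{X}^{[0,\epsilon]}(\pi'^{-1}(\theta))$ uniformly in $\theta \in S^1$. Sandwiching the indicator $\mathbf{1}_{V_n(\theta,\epsilon)}$ above and below by continuous bump functions that differ from $\mathbf{1}_{V(\theta,\epsilon)}$ only in small collars, and applying weak-$\ast$ convergence $\mu_n \to \mu$, one obtains
\[\sup_{\theta \in S^1} \Bigl| \tfrac{1}{\epsilon}\mu_n(V_n(\theta,\epsilon)) - \tfrac{1}{\epsilon}\mu(V(\theta,\epsilon)) \Bigr| \longrightarrow 0.\]
Passing $\epsilon \to 0$ and invoking $X$-invariance of $\mu$ yields uniform convergence $F(X_n,\mu_n,\pi',\cdot) \to F(X,\mu,\pi',\cdot)$ on $S^1$, and hence convergence of $\max_\theta F$ and $\min_\theta F$ for each fixed $\pi'$.

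Upper semi-continuity of both invariants then follows directly. Given $\delta > 0$, choose $\pi^* \in \Pi$ with $\max_\theta F(X,\mu,\pi^*,\theta) < Tks(X,\mu,\pi) + \delta$; since $Tks(X_n,\mu_n,\pi) \leq \max_\theta F(X_n,\mu_n,\pi^*,\theta)$ for all $n$, letting $n \to \infty$ followed by $\delta \to 0$ gives $\limsup Tks(X_n,\mu_n,\pi) \leq Tks(X,\mu,\pi)$; the wrappingness case is parallel via $\min_\theta$.

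The main obstacle is the lower semi-continuity direction $\liminf Tks(X_n,\mu_n,\pi) \geq Tks(X,\mu,\pi)$. Let $\pi_n' \in \Pi$ be nearly-minimizing fibrations for $(X_n,\mu_n)$. Because fibrations of very large $C^1$-norm always produce some level with large flux, one may restrict $\pi_n'$ to a precompact subfamily of $C^1$-fibrations without loss of generality. Extracting a subsequence $\pi_n' \to \pi^* \in \Pi$ and applying the uniform continuity from Step 1 along this subsequence yields $\liminf Tks(X_n,\mu_n,\pi) \geq \max_\theta F(X,\mu,\pi^*,\theta) \geq Tks(X,\mu,\pi)$. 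The version with a fibered link $B$ then follows by extending the flow-box analysis in a controlled tubular neighborhood of each component of $B$, using the boundedness condition of Remark \ref{rmk:bounded} together with the persistence of the periodic orbits constituting $B$ under the weak-$\ast$ convergence.
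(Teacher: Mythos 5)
Your Step 1 and the upper semi-continuity argument are essentially the paper's: the paper also fixes a near-optimal fibration $\pi^\ast$ for $(X,\mu)$, uses weak-$\ast$ convergence to compare fluxes through the fibers of that \emph{fixed} fibration, and concludes $\limsup_n Tks(X_n,\mu_n,\pi) \leq Tks(X,\mu,\pi)$ (and likewise for $Wrp$ with $\min_\theta$). One caveat even here: you assume the fibers of $\pi'$ can be chosen transverse to $X$, which is exactly what cannot be assumed in general (if it could, the theorem on surfaces of section would make $Wrp=Tks=$ winding number and the invariants would be uninteresting); the flux comparison has to be run without transversality, as the paper does.

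The genuine gap is in your lower semi-continuity step. You propose to take nearly-minimizing fibrations $\pi_n'$ for $(X_n,\mu_n)$, argue they lie in a precompact family of $C^1$ fibrations, extract a limit $\pi^\ast\in\Pi$, and pass to the limit. The justification offered --- ``fibrations of very large $C^1$-norm always produce some level with large flux'' --- is unsubstantiated and almost certainly false: the flux is computed against the measure $\mu_n$, so a fibration can oscillate arbitrarily wildly in a region of small $\mu_n$-measure without increasing any level's flux, and for $Wrp$ (a $\min_\theta$) even a single well-behaved fiber suffices to keep the quantity small. Moreover, even granting a $C^1$-bounded subsequence, the space of fibrations is not $C^1$-closed (limits can acquire critical points), so the limit $\pi^\ast$ need not lie in $\Pi$. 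The paper's proof shows this compactness is entirely unnecessary: for each $n$ choose $\pi_{n,k}$ nearly minimizing for $(X_n,\mu_n)$, use weak-$\ast$ convergence to compare the fluxes of $(X_n,\mu_n)$ and $(X,\mu)$ through the \emph{same} fibration $\pi_{n,k}$, and then invoke the trivial inequality $wrp(X,\mu,\pi_{n,k})\geq Wrp(X,\mu,\pi)$ (the infimum is a lower bound over every element of $\Pi$, so no convergence of the $\pi_{n,k}$ is needed). The chain
\[Wrp(X_n,\mu_n,\pi)\;\geq\; wrp(X_n,\mu_n,\pi_{n,k})-\epsilon\;>\; wrp(X,\mu,\pi_{n,k})-2\epsilon\;\geq\; Wrp(X,\mu,\pi)-2\epsilon\]
then contradicts the assumed gap of $3\epsilon$. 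You should replace your compactness argument with this comparison; the only point requiring care (in both your write-up and the paper's) is that the weak-$\ast$ flux estimate must hold with constants uniform over the surfaces $\pi_{n,k}^{-1}(\theta)$, since these vary with $n$.
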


\subsubsection{Obstruction to surfaces of section}

Just as the trunk of $L$ with respect to $\pi$ is an obstruction to $L$ being braided, the trunkenness of $X$ with respect to $\pi$ is an obstruction to $X$ admitting a surface of section Poincare dual to $[\pi] \in H^1(Y,\ZZ)$. 

\begin{theorem}
Let $X$ be a volume-preserving flow on $Y$ and $\pi: Y \rightarrow S^1$ a fibration.  Suppose that there exists a surface of section representing the Poincare dual to $[\pi]$.  Then for each $X$-invariant measure we have
\[Link(X,\mu,\pi) = Wrp(X,\mu,\pi) = Tks(X,\mu,\pi)\]
\end{theorem}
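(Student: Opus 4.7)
The plan is to exhibit a specific fibration $\pi_\Sigma \in \Pi$ every fiber of which is transverse to $X$, and then to exploit this transversality to equate the geometric flux with the absolute value of the algebraic flux, which is $Link(X,\mu,\pi)$. Given the surface of section $\Sigma$ Poincar\'e dual to $[\pi]$, the first-return map $f: \text{int}(\Sigma) \to \text{int}(\Sigma)$ is well-defined, and the flow identifies $Y \setminus \partial\Sigma$ with the mapping torus of $f$. Projecting this mapping torus onto its $S^1$-factor produces a smooth fibration $\pi_\Sigma: Y \setminus \partial\Sigma \to S^1$ having $\Sigma$ as a fiber, extending smoothly across the periodic orbits in $\partial\Sigma$. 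Since $[\pi_\Sigma]$ and $[\pi]$ both equal the Poincar\'e dual of $[\Sigma]$, a standard consequence of Thurston's fibered-face theory yields that the fibrations are isotopic, so $\pi_\Sigma \in \Pi$.

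By construction, $X$ is transverse to every fiber $\pi_\Sigma^{-1}(\theta)$, so $\iota_X \Omega$ has constant sign pointwise on each fiber. Hence
\[\text{Flux}(X,\mu,\pi_\Sigma^{-1}(\theta)) \;=\; \int_{\pi_\Sigma^{-1}(\theta)} |\iota_X \Omega| \;=\; \left| \int_{\pi_\Sigma^{-1}(\theta)} \iota_X \Omega \right| \;=\; Link(X,\mu,\pi),\]
using that $\iota_X \Omega$ is closed (since $X$ preserves $\Omega$) and that all fibers represent the same homology class. The identity extends to a general $X$-invariant measure via the characterization $\text{Flux}(X,\mu,S) = \lim_{\epsilon \to 0} \epsilon^{-1}\mu(\phi_X^{[0,\epsilon]}(S))$ together with the transversality of $X$ to $\pi_\Sigma^{-1}(\theta)$. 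This immediately gives the upper bound $Tks(X,\mu,\pi) \leq \max_\theta \text{Flux}(X,\mu,\pi_\Sigma^{-1}(\theta)) = Link(X,\mu,\pi)$. Conversely, the pointwise inequality $|\iota_X \Omega| \geq \iota_X \Omega$ yields $\text{Flux}(X,\mu,\pi'^{-1}(\theta)) \geq Link(X,\mu,\pi)$ for every $\pi' \in \Pi$ and every $\theta$, so $Link(X,\mu,\pi) \leq Wrp(X,\mu,\pi)$. Combined with the general chain $Link \leq Wrp \leq Tks$, all three quantities coincide.

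The main obstacle is the step identifying $\pi_\Sigma$ with an element of the isotopy class $\Pi$: the hypothesis supplies only cohomological equality via Poincar\'e duality, which must be upgraded to isotopy of fibrations. This reduces to the 3-manifold topology fact that within a single fibered face of the Thurston norm ball, a primitive integral cohomology class determines a fibration of $Y$ uniquely up to isotopy. If one prefers to sidestep this subtlety, one can enlarge $\Pi$ in the definitions of $Wrp$ and $Tks$ to include all embedded surfaces representing the Poincar\'e dual class, and then the argument above applies directly using $\Sigma$ itself. A minor bookkeeping point is to orient $\Sigma$ compatibly with the flow crossing so that $Link(X,\mu,\pi) \geq 0$, which is automatic once $\Sigma$ is given a coherent transverse orientation.
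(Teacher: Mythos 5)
Your proof is correct and takes essentially the same approach as the paper: the paper's (two-line) proof simply asserts that $\pi$ can be isotoped until every page is positively transverse to the flow and then reads off the equalities from the definitions, while you supply the details that assertion hides --- the mapping-torus construction of the transverse fibration from $\Sigma$, the identification of that fibration with an element of $\Pi$ via uniqueness of fibrations in a given cohomology class, and the pointwise argument that transversality forces $\mathrm{Flux} = |Link| $ on every fiber. Nothing further is needed.
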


\begin{proof}
Since there exists a surface of section, we can isotope $\pi$ such that every page of the fibration is positively-transverse to the flow of $X$.  The equalities now immediately follow from the definitions.
\end{proof}

It is interesting to compare this result with the conclusion of Schwartzman-Fried-Sullivan theory of asymptotic cycles (see \cite{H-SFS}), which proves that a surface of section exists if and only if $Link(X,\mu,\pi)$ is positive for every $X$-invariant measure.

\subsubsection{Independence of Helicity}

As with trunkenness for volume-preserving flows in $S^3$, the wrappingness and trunkenness of a volume-preserving flow with respect to a fibered link is independent of helicty.

\begin{theorem}
\label{thrm:helicity}
    Let $Y$ be a 3-manifold and let $(B,\pi)$ be an open book decomposition with planar pages.      There are no functions $f_W$ or $f_T$ such that, for every ergodic volume-preserving vector field on $Y$ with $B$ as a union of periodic orbits, one has
    \begin{align*}
        Wrp(X,\mu,U) &= f_W(Hel(X,\mu),[\beta] \cdot \mu) \\
        Tks(X,\mu,U) &= f_T(Hel(X,\mu),[\beta] \cdot \mu)
    \end{align*}
\end{theorem}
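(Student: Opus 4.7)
The plan is to adapt the counterexample construction of Dehornoy--Rechtman \cite{Dehornoy-Rechtman} to the fibered setting and to exhibit, for some admissible pair $(h,w)$, two ergodic volume-preserving vector fields $(X_1,\mu_1)$ and $(X_2,\mu_2)$ on $Y$, each having $B$ as a union of periodic orbits, with $Hel(X_i,\mu_i)=h$ and $[\beta]\cdot\mu_i = w$, yet with $Tks(X_1,\mu_1,B)\neq Tks(X_2,\mu_2,B)$; a parallel construction would separately show that $Wrp$ is not a function of $(h,w)$ either.

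First, I would construct a base flow. Because $(B,\pi)$ has planar pages, the Giroux correspondence supplies a contact form $\alpha$ on $Y$ supported by the open book whose Reeb vector field $R$ has $B$ as closed orbits and is positively transverse to the interior of every page of $\pi$. Taking $\alpha\wedge d\alpha$ as the invariant volume, normalized to a probability measure $\mu_R$, one has $Wrp(R,\mu_R,B)=Tks(R,\mu_R,B)=[\beta]\cdot\mu_R$, and after rescaling this common value may be arranged to equal any prescribed $w$. The helicity $Hel(R,\mu_R)$ can then be recorded as the target value $h$.

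Next, I would modify $R$ by inserting volume-preserving plugs $P_n$ supported in small balls disjoint from $B$, in the spirit of the Wilson/Kuperberg plug tradition used by Dehornoy--Rechtman. The plugs should be engineered so that: (i) each $P_n$ agrees with the ambient flow near the boundary of its support and preserves volume; (ii) orbits traversing the plug undergo a Markov-type braiding that forces at least $n$ crossings of any reference transverse disk inside the plug region; (iii) the plug's internal dynamics carry no net homological linking, so the helicity contribution vanishes. Inserting $P_n$ into $R$ produces a new volume-preserving flow $R_n$ with $Hel(R_n,\mu_{R_n})=h$ and $[\beta]\cdot\mu_{R_n}=w$, but with trunkenness growing linearly in $n$: for any isotopic fibration $\pi'\in\Pi$, at least one fiber must pass through the plug ball, and once inside it inherits $\Omega(n)$ transverse intersections with long orbit segments. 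A variant of the plug, creating a uniform rather than concentrated braiding, would do the analogous job for wrappingness.

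The principal obstacles are threefold. First, verifying that plug insertion preserves helicity in a general $Y$ (rather than $S^3$): one must argue via a Biot--Savart-type presentation, or a direct decomposition of the double integral defining helicity, that all new linking contributions produced inside the contractible plug support cancel; this is where the contractibility of the plug support and the homological triviality of its internal twisting enter. Second, proving the trunkenness (or wrappingness) lower bound uniformly over every isotopy class representative $\pi'\in\Pi$: the planar pages hypothesis is essential here, since it restricts which reconfigurations of pages are possible and forces reshaped fibers to interact with the plug region in a combinatorially controlled way. Third, securing ergodicity: either one builds ergodicity into the plug dynamics directly via a suitable mixing profile on the internal rectangles, or one appeals to ergodic decomposition together with \autoref{thrm:weak-star-convergense} and \autoref{thrm:homeo-invariance} to transfer the strict inequality to an ergodic component of $\mu_{R_n}$. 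The subtlest point of the argument will be reconciling (ii) with (iii) inside a plug that is simultaneously homologically trivial and dynamically rich enough to distinguish arbitrarily many isotopy classes of fibers.
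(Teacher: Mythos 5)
Your strategy---start from a Reeb field transverse to the pages and inflate the trunkenness by inserting volume-preserving plugs in balls while freezing helicity and winding number---is genuinely different from the paper's, and its logical shape (two flows with equal $(Hel,[\beta]\cdot\mu)$ but distinct $Tks$) is the directly relevant one. However, as written it is a research plan rather than a proof: the three ``principal obstacles'' you list are precisely where the argument has to happen, and none is resolved. The most serious gap is the uniform lower bound. Trunkenness is $\inf_{\pi'\in\Pi}\max_\theta \mathrm{Flux}(X,\mu,(\pi')^{-1}(\theta))$, an infimum over \emph{all} fibrations isotopic to $\pi$, so no bound can be extracted from counting intersections with ``a reference transverse disk inside the plug region'': you must show that for \emph{every} isotopic fibration some fiber carries large flux, and flux is weighted by the invariant measure, not by the geometric intersection number of individual orbits. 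Proposition~\ref{prop:trunk-embedding} supplies the needed statement for \emph{links} contained in a ball (this is where planarity of the pages enters), but to use it you would also need a Dehornoy--Rechtman-type approximation theorem relating the trunkenness of $\mu$ to normalized trunks of long closed-up orbit segments, which is neither stated nor proved here in the fibered setting. The helicity-cancellation claim for a plug that is simultaneously homologically trivial and heavily braided is likewise asserted rather than established, and you correctly flag that these two requirements are in tension; on a general $Y$ even the definition of helicity requires the auxiliary closed form $\beta$, so ``the plug contributes nothing'' needs an actual computation.

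For comparison, the paper sidesteps all of this by using explicitly computable $T^2$-invariant flows $X=f(t)\partial_{x_1}+g(t)\partial_{x_2}$ on thickened tori sitting inside a decomposition of $Y$ into the building blocks $A$, $B$, $C$ (Lemma~\ref{lemma:basic-decomposition}). Propositions~\ref{prop:trunkenness-unknotted-torus} and~\ref{prop:wrappingness-thickened-torus} give closed formulas $4\pi\int\min(|f|,|g|)\,dt$ and $4\pi\int|f|\,dt$, obtained from Zupan's computation of the trunk of torus knots together with Theorem~\ref{thrm:weak-star-convergense} to pass from rational to irrational slopes, while Example~\ref{ex:helicity-sine} exhibits a parameter $Q$ that moves the helicity without changing $\min(|f|,|g|)$, $|f|$, or the winding number once $|a|<|b|-|Q|$. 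Independence follows at once, and ergodicity is obtained by Katok's $C^1$-perturbation combined with the continuity theorems. If you wish to pursue the plug route, the minimum missing ingredients are (i) a concrete volume-preserving plug with computable trunk and provably vanishing helicity contribution, and (ii) a measure-theoretic analogue of Proposition~\ref{prop:trunk-embedding} bounding $\max_\theta\mathrm{Flux}$ from below uniformly over $\Pi$ for invariant measures supported in a ball.
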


\subsubsection{Existence of periodic orbits}

The trunkenness is defined as the infimum over all smooth fibrations.  If the infimum is actually achieved by some smooth fibration $\pi$, then this forces the existence of a periodic orbit.

\begin{theorem}
\label{thrm:orbit-existence}
Let $X$ be a nonsingular vector field preserving the measure $\mu$.  Suppose that there exists some representative $\pi': Y \rightarrow S^1$ such that
\[Tks(X,\mu,\pi) = \underset{\theta \in S^1}{\text{max}} \, \, Flux(X,\mu,\pi'^{-1}(\theta)) > [\pi] \cdot \mu\]
Then $X$ has a periodic orbit tangent to a fiber of $\pi'$.
\end{theorem}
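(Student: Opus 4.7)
The plan is to argue by contradiction: assume some $\pi' \in \Pi$ realizes the infimum defining $Tks(X,\mu,\pi)$ with $M := \max_\theta \mathrm{Flux}(X,\mu,\pi'^{-1}(\theta)) > [\pi]\cdot\mu$ but no periodic orbit of $X$ is contained in any fiber of $\pi'$, and construct an isotopic fibration $\pi''$ with $\max_\theta \mathrm{Flux}(X,\mu,\pi''^{-1}(\theta)) < M$, contradicting the achievement of the infimum.

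For the setup, let $\Omega$ be the $X$-invariant volume form representing $\mu$. Since $\mathcal{L}_X \Omega = 0$, the two-form $\iota_X\Omega$ is closed, so the signed flux $\int_{\pi'^{-1}(\theta)}\iota_X\Omega = [\pi]\cdot\mu$ is independent of $\theta$, while the unsigned flux $f(\theta) := \int_{\pi'^{-1}(\theta)}|\iota_X\Omega|$ satisfies $f(\theta) \geq [\pi]\cdot\mu$ with equality precisely when the flow crosses $\pi'^{-1}(\theta)$ with uniform sign. Pick $\theta^*$ attaining $f(\theta^*) = M$ and set $F^* := \pi'^{-1}(\theta^*)$. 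The strict inequality $M > [\pi]\cdot\mu$ forces the reverse set $R^* = \{p \in F^* : d\pi'(X(p)) < 0\}$ to carry positive $|\iota_X\Omega|$-mass. Let $T := \{y \in Y : X(y) \in \ker d\pi'\}$ denote the global tangency locus; by hypothesis $T$ contains no periodic orbit of $X$.

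The core step is a local unfolding isotopy. Pick a compact subset $K \subset R^*$ of positive mass. Because no periodic orbit lies in $T$ and $K$ is compact, a flow-box and recurrence argument produces $\epsilon > 0$ and a neighborhood $U$ of $K$ such that the orbit segments $\phi_X^{[0,\epsilon]}(K)$ embed disjointly in $U$ and exit $F^*$ without returning to $F^* \cap U$. Inside $U$, replace the disc $K \subset F^*$ by a flap consisting of two short $X$-flow strips from $\partial K$ glued to a cap lying on a nearby fiber on the opposite side of $F^*$, obtaining a surface $F^{**}$ isotopic to $F^*$ relative to $F^* \setminus U$. Since $F^{**}$ is homologous to $F^*$ the signed flux through it is unchanged; however the reverse contribution $\int_K|\iota_X\Omega|$ is replaced by flap contributions in which the flow strips (tangent to $X$) contribute nothing and the cap can be arranged to be positively transverse. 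Tracking cancellations, the unsigned flux through $F^{**}$ is strictly less than through $F^*$ by a quantity bounded below by $2\int_K|\iota_X\Omega|$. Extending this local surgery to an ambient isotopy supported in $U$ yields a fibration $\pi''$ whose fiber at $\theta^*$ has strictly smaller flux.

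The main technical obstacle is global rather than local: the maximum of $f$ may be attained on a closed set $S \subset S^1$ of positive measure, and a surgery confined to a neighborhood of $\theta^*$ could simply shift the maximum to a nearby level. To handle this, one must carry out the unfolding uniformly across $S$, using compactness of $S$ and the no-tangent-periodic-orbit hypothesis to construct a finite cover of the union of reverse regions over $S$ by flow-boxes in which the local surgery can be performed simultaneously. The absence of tangent periodic orbits is exactly what furnishes uniform flow-box control over this union, since every orbit of $T$ exits any prescribed compact set in uniform time. This parametric version of the unfolding is the fibered analog of the Dehornoy--Rechtman argument in $S^3$ and constitutes the technical heart of the proof; once in place, it produces $\pi''$ with $\max f < M$, the desired contradiction.
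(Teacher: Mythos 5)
There is a genuine gap, in fact two. First, your argument never produces the periodic orbit: it is a pure contradiction argument resting on the assertion that, absent periodic orbits in the tangency locus $T$, ``every orbit of $T$ exits any prescribed compact set in uniform time.'' This is not justified. An orbit along which $d\pi'(X)\equiv 0$ lies in a single page, and the absence of periodic orbits in that page does not by itself force the orbit to leave it: you must also rule out the other possible limit sets of a semi-orbit trapped in the page. This is exactly what the paper's proof supplies via the generalized Poincar\'e--Bendixson theorem: the $\alpha$- and $\omega$-limit sets of a semi-orbit contained in $P_\theta$ are a fixed point (excluded since $X$ is nonsingular), a periodic orbit (which is the desired conclusion), or a torus (excluded because it would force $P_\theta=P^t_\theta$, contradicting $P^{\pm}_\theta\neq\emptyset$). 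Only after this dichotomy is one entitled to assume all tangency orbits leave the page and pass to the perturbation step. Your proposal skips this entirely, so the step on which everything else depends is unsupported.

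Second, the flux-reduction mechanism in your ``local unfolding isotopy'' does not work as described. If $K$ is a disc in the reverse region $R^*$ and you replace it by flow strips from $\partial K$ capped off by a pushed-off copy of $K$ on a nearby fiber, the cap is crossed by the very same orbit segments that crossed $K$, with the same (negative) sign: pushing a surface along the flow never changes the sign of a transverse intersection, and the flow strips contribute zero. Your key claim that ``the cap can be arranged to be positively transverse'' is therefore false in general, and without it the unsigned flux is unchanged, not reduced by $2\int_K|\iota_X\Omega|$. A genuine reduction must locate an orbit segment meeting the fiber in a cancelling pair of oppositely signed intersections and remove the pair by a bigon/finger move; finding such pairs requires analyzing where orbits go after meeting the tangency locus, which is precisely the local case analysis the paper imports from the proof of Theorem D of Dehornoy--Rechtman. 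Your closing paragraph correctly flags a real subtlety (the maximum of the flux may be attained on a large set of $\theta$), but then defers it as ``the technical heart,'' so the hardest parts of the argument are acknowledged rather than proved.
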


\begin{corollary}
Let $R$ be an overtwisted Reeb flow on $S^3$.  Suppose that $R$ has an unknotted periodic orbit $U$ and the trunkenness of $R$ with respect to $U$ and the volume form $\alpha \wedge d \alpha$ achieves its infimum.  Then $R$ admits a second unknotted periodic orbit
\end{corollary}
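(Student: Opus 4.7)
The plan is to apply Theorem~\ref{thrm:orbit-existence} to the open book decomposition of $S^3$ determined by $U$. Since $U$ is unknotted, the complement $S^3 \setminus U$ fibers over $S^1$ with disk pages, giving $\pi\colon S^3 \setminus U \to S^1$, and every $\pi' \in \Pi$ in the same isotopy class also has disk pages. By hypothesis some $\pi'$ realizes the infimum in $Tks(R,\mu,U)$, so to apply Theorem~\ref{thrm:orbit-existence} it suffices to verify the strict inequality $Tks(R,\mu,U) > [\pi] \cdot \mu$.

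To establish this inequality I would argue by contradiction: assume $Tks(R,\mu,U) = [\pi] \cdot \mu$. Using $\alpha(R) = 1$ and $\iota_R d\alpha = 0$ gives $\iota_R(\alpha \wedge d\alpha) = d\alpha$, so the algebraic flux $\int_{\pi'^{-1}(\theta)} d\alpha = [\pi] \cdot \mu$ is constant in $\theta$. Combining
\[
\int_{\pi'^{-1}(\theta)} |d\alpha| \;\geq\; \left| \int_{\pi'^{-1}(\theta)} d\alpha \right| \;=\; [\pi] \cdot \mu
\]
with $\max_\theta \int |d\alpha| = Tks = [\pi] \cdot \mu$ forces equality for every $\theta$, so $d\alpha \geq 0$ on every page of $\pi'$. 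After a small perturbation of $\pi'$ within $\Pi$ to eliminate the degenerate locus, $R$ becomes strictly positively transverse to every page, and combined with $\alpha(R) = 1 > 0$ on the binding $U$ this exhibits $(U,\pi')$ as a supporting open book for $\xi = \ker \alpha$. By Giroux's correspondence the unknot open book with disk pages on $S^3$ supports only the standard tight contact structure, contradicting the assumption that $\xi$ is overtwisted.

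Once the strict inequality is in hand, Theorem~\ref{thrm:orbit-existence} produces a periodic orbit $\gamma$ of $R$ contained in some fiber $\pi'^{-1}(\theta_0)$. Since $\pi'$ is isotopic to $\pi$, the fiber $\pi'^{-1}(\theta_0)$ is a topological disk embedded in $S^3$; the simple closed curve $\gamma$ lying in this disk bounds a sub-disk inside the fiber and is therefore unknotted in $S^3$. It lies in an open page, so it is disjoint from the binding $U$, providing the desired second unknotted periodic orbit.

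The main obstacle I anticipate is making the perturbation step rigorous. The equality case above only yields $d\alpha \geq 0$ on pages, possibly with a nonempty closed tangency locus, rather than strict positivity everywhere. One approach is a direct smoothing argument perturbing $\pi'$ to push pages off this locus while remaining in $\Pi$. A cleaner alternative is to invoke the theorem of Hofer (or Hofer--Wysocki--Zehnder) that a Reeb flow on $S^3$ admitting a disk-like global surface of section must have tight underlying contact structure, which contradicts overtwistedness directly.
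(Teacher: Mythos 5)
Your proposal is correct and follows essentially the same route as the paper: use overtwistedness together with the fact that the disk open book bound by $U$ supports only the tight contact structure to rule out $Tks(R,\alpha\wedge d\alpha,\pi) = Link(R,\alpha\wedge d\alpha,\pi)$, then apply Theorem~\ref{thrm:orbit-existence} to the achieved infimum and observe that an orbit tangent to a disk page is unknotted. The only difference is that you spell out the strict-inequality step (via $\iota_R(\alpha\wedge d\alpha) = d\alpha$, the equality case of $\int|d\alpha| \geq |\int d\alpha|$, and the perturbation off the tangency locus), which the paper asserts in one sentence; your flagged concern about the degenerate locus is a real subtlety present in the paper's version as well, and either of your proposed fixes resolves it.
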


\begin{proof}
    The unknot binds an open book decomposition $(U,\pi)$ with page $D^2$.  This open book decomposition supports the unique tight contact structure on $S^3$.  Therefore, if $\xi$ is overtwisted, no Reeb vector field cannot be transverse to this open book decomposition.  Since the trunkenness of $R$ with respect to $U$ is achieved, then
    \[Link(R,\alpha \wedge d \alpha, \pi) < Tks(R,\alpha \wedge d \alpha,\pi)\]
    Therefore, the Reeb flow admits a periodic orbit tangent to a page of the fibration.  Since the page is a topological disk, this orbit must be unknotted.
\end{proof}

\subsection{Acknowledgements}
I would like to thank Alex Zupan and Nur Saglam for discussions about the trunk of links.

\section{Trunk and Wrapping number}

We can prove the following special cases of Conjecture \ref{conj:fiber-sum}, which will be used in the proof of Theorem \ref{thrm:helicity}.

\begin{proposition}
\label{prop:trunk-embedding}
    Let $L \subset B^3 \subset Y$ be a link embedded in a 3-ball.  Suppose that $\pi: Y \rightarrow S^1$ is a fibration with planar pages.  Then
    \begin{enumerate}
        \item $wind(L,\pi) = wrap(L,\pi) = 0$,
        \item $trunk(L,\pi) = trunk(L,S^3)$,
    \end{enumerate}
    Moreover, if $L' \subset Y \setminus B^3$ is another link in the complement of the 3-ball, then
    \begin{enumerate}
        \item $wrap(L' \cup L,\pi) = wrap(L',\pi)$,
        \item $trunk(L' \cup L,\pi) \geq \text{max}\left( trunk(L',\pi), wrap(L',\pi) + trunk(L,S^3) \right)$
    \end{enumerate}
\end{proposition}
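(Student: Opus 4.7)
The plan is to reduce the four assertions to a ball-manipulation argument (isotoping $B^3$ into a slab of $\pi$) and, for the sharpest direction, a Cerf-theoretic simplification of the fibration inside the ball.

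First I would dispose of (1a) and (2a) in parallel. Because $B^3$ is contractible, every class in $H_1(L;\ZZ)$ dies in $H_1(Y;\ZZ)$, so $wind(L,\pi)=0$. For the wrapping number, I would use that any two embedded $3$-balls in a connected $3$-manifold are ambient isotopic: this lets us move $B^3$ into a thin slab $\pi^{-1}((a,b))$ with $(a,b)\subsetneq S^1$, and then $L$ is disjoint from $\pi^{-1}(\theta_0)$ for any $\theta_0\notin(a,b)$, giving $wrap(L,\pi)=0$. The assertion $wrap(L'\cup L,\pi)\geq wrap(L',\pi)$ is tautological; for the reverse, given any $\pi''\in\Pi$ and $L''\sim L'$ realizing (up to $\varepsilon$) the minimum $|L''\cap\pi''^{-1}(\theta_0)|=wrap(L',\pi)$, I would run the same ambient isotopy \emph{inside} $Y\setminus L''$ to carry $L$ into a small ball disjoint from the fiber $\pi''^{-1}(\theta_0)$, which uses only that $Y\setminus L''$ is a connected open $3$-manifold.

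Next I would handle the upper bound of (1b) and all of (2b). For the upper bound, by definition of $trunk(L,S^3)$ there exists a Morse function $f\colon S^3\to\RR$ with exactly two critical points and a representative of $L$ positioned so that level sets of $f$ realize the trunk; after isotopy in $S^3$ we may place $L$ inside a small ball $B\subset S^3$ disjoint from the critical points of $f$, on which $f$ restricts to a submersion with disk level sets. Identifying this model with a small product chart $(a,b)\times D^2\subset Y$ on which $\pi$ is projection to the first factor, and transplanting $L$ via ambient isotopy in $Y$, realizes the upper bound. For (2b), $trunk(L\cup L',\pi)\geq trunk(L',\pi)$ is immediate. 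For the other bound, fix any configuration $(\pi'',L''\cup L''')$ with $L''\sim L$ and $L'''\sim L'$, and pick $\theta^*$ maximizing $|L''\cap\pi''^{-1}(\theta^*)|$; part (1b) applied to the sublink $L''$ forces $|L''\cap\pi''^{-1}(\theta^*)|\geq trunk(L,S^3)$, while $|L'''\cap\pi''^{-1}(\theta^*)|\geq \min_\theta |L'''\cap\pi''^{-1}(\theta)|\geq wrap(L',\pi)$ by definition of the wrap. Summing and taking the infimum over configurations produces the bound.

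The remaining piece, and by far the hardest, is the lower bound $trunk(L,\pi)\geq trunk(L,S^3)$ in (1b). For any $\pi'\in\Pi$, lift $\pi'|_{B^3}$ to a submersion $\tilde\pi\colon B^3\to\RR$, possible since $B^3$ is simply connected. Because each intersection $\pi'^{-1}(\theta)\cap B^3$ decomposes into level sets of $\tilde\pi$, one has $\max_\theta |L\cap\pi'^{-1}(\theta)|\geq \max_t |L\cap\tilde\pi^{-1}(t)|$. The strategy is to promote $\tilde\pi$ to a Morse function on $S^3=B^3\cup\bar B^3$ with exactly two critical points, both in $\bar B^3$, and the same level-set pattern inside $B^3$; the definition of $trunk(L,S^3)$ then supplies the desired bound. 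The principal obstacle is that such an extension exists with only two critical points precisely when $\tilde\pi|_{\partial B^3}$ itself has exactly two critical points on $S^2$, whereas generically it has many. To remedy this, I would modify $\tilde\pi$ in a thin collar neighborhood of $\partial B^3$ via Cerf-theoretic saddle-extremum cancellations along gradient trajectories on $\partial B^3$; because $L$ is compactly contained in the open interior of $B^3$, each cancellation can be supported in a collar disjoint from $L$, so it preserves both the submersion property of $\tilde\pi$ and the intersection counts with $L$. Iterating reduces $\tilde\pi|_{\partial B^3}$ to one maximum and one minimum, after which a standard radial (e.g.\ harmonic) extension on $\bar B^3$ completes the Morse function with exactly two critical points.
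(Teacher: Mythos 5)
Most of your argument tracks the paper's own proof: the ball-isotopy reductions for $wind$, $wrap$, and $wrap(L'\cup L,\pi)$ are the same general-position step the paper carries out via Roussarie--Thurston position for $\partial B^3$; your transplantation of a product chart for the upper bound $trunk(L,\pi)\le trunk(L,S^3)$ is the paper's converse direction; and your two-term estimate at a single fiber $\theta^*$ for $trunk(L'\cup L,\pi)$ is exactly the paper's argument. The problem is your treatment of the key inequality $trunk(L,\pi)\ge trunk(L,S^3)$.

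Your argument for that inequality never invokes the hypothesis that the pages of $\pi$ are planar, and that hypothesis is precisely what makes the statement work. The level sets $\tilde\pi^{-1}(t)\cap B^3$ are subsurfaces of pages of $\pi'$. A Morse function on $S^3$ with exactly two critical points has every regular level set equal to a single $2$-sphere, so any extension of $\tilde\pi$ to such a function forces each $\tilde\pi^{-1}(t)\cap B^3$ to embed in $S^2$. If the pages have genus, these subsurfaces can have positive genus and no such extension exists; your ``standard radial (e.g.\ harmonic) extension'' would then have level sets of positive genus and hence unavoidably extra critical points. Relatedly, reducing $\tilde\pi|_{\partial B^3}$ to one maximum and one minimum by collar modifications does not by itself make the interior level sets disks or even planar, so the Cerf-cancellation step, even granting that it can be carried out disjointly from $L$, does not deliver the extension you need. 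The paper's route uses planarity head-on: after arranging $\pi(B^3)\subset[0,1/2]$, one has $B^3\subset\pi^{-1}([0,1/2])\cong P\times[0,1/2]$ with $P$ planar, hence a level-preserving embedding $P\times[0,1/2]\hookrightarrow S^2\times[0,1/2]\hookrightarrow S^3$ carrying the pages into the spherical level sets of a two-critical-point Morse function. You should replace your collar-cancellation and extension step with this capping-off argument; as written the proof has a genuine gap, and it would ``prove'' a statement that is false without the planarity hypothesis.
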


\begin{proof}
    To prove the first statement, we isotope $\partial B^3$ into Roussarie-Thurston general position with respect to the fibration $\pi$.  This implies that $\pi$, restricted to $\partial B^3$, has exactly one local maximum and one local minimum.  Furthermore, by an isotopy we can assume that the image of $\pi|_{B^3}$ is $[0,\epsilon] \in S^1$ for some $\epsilon > 0$.  Therefore, we can assume that $L$ is disjoint from some page of the fibration. Now, let $\pi$ and $\theta$ be a fixed fibration and regular value realizing the wrapping number of $L'$.  Then we can isotopy $B^3$ until its image in $S^1$ is disjoint from $\theta$.  This regular value and fibration realize the wrapping number of $L' \cup L$.

    Now, suppose that $\pi$ is a fixed map to $S^1$ that realizes the trunk of $L$.  Since $B^3$ is simply-connected, then after pulling the fibration back by a covering map $S^1 \rightarrow S^1$ of high degree, we can assume that $\pi(B^3) \subset [0,1/2]$.  In particular, there are embeddings
    \[B^3 \hookrightarrow P \times [0,1/2] \hookrightarrow S^2 \times [0,1/2] \hookrightarrow S^3\]
    that commute with $\pi$.  Here, $P$ is the (abstract) planar page of the fibration, which embeds in $S^2$ by definition.  We can then embed $S^2 \times [0,1/2]$ into $S^3$ so that projection onto $[0,1/2]$ agrees with a Morse function with two critical points.  Consequently,
    \[trunk(L,\pi) \geq trunk(L,S^3)\]
    Conversely, if $f: S^3 \rightarrow \RR$ realizes the trunk of $L$, we can embed $L \subset D^2 \times [0,1]$ into the fibered 3-manifold $Y$ and see that
    \[trunk(L,S^3) \geq trunk(L,Y)\]

    Finally, the trunk satisfies the inequality
    \[trunk(L' \cup L,\pi) \geq wrap(L',\pi) + trunk(L,S^3)\]
    because for any fibration $\pi$, there is some regular level set $\theta$ such that
    \[\#(\pi^{-1}(\theta) \cap L) = trunk(L,S^3)\]
    and this regular level set intersects $L'$ at least $wrap(L',\pi)$-times.  Therefore, this gives a lower bound on the trunk of $L' \cup L$.  Moreover, there exists a regular level set $\theta'$ of this same fibration such that 
    \[\# \pi^{-1}(\theta') \cap L' \geq trunk(L')\]
    This yields the final inequality.    
\end{proof}

\section{Main results}

The proofs of Theorems \ref{thrm:homeo-invariance} and \ref{thrm:weak-star-convergense} are straightforward modifications of Theorem A, Theorem B and Theorem D of \cite{Dehornoy-Rechtman}, except that height functions are replaced by fibrations.

\subsection{Homeomorphism invariance}

\begin{proof}[Proof of Theorem \ref{thrm:homeo-invariance}]
    This proof follows the proof of \cite[Theorem A]{Dehornoy-Rechtman}.  
    Let $f$ be a homeomorphism conjugating the flows of $X_1,X_2$.  Suppose that
    \[\delta = Tks(X_2,\mu,\pi) - Tks(X_1,\mu,\pi) > 0\]
    Let $\pi_n$ be a sequence of fibrations such that
    \[tks(X_1,\mu,\pi_n) := \underset{\theta \in S^1}{\text{max}}\, \lim_{\epsilon \rightarrow 0} \frac{1}{\epsilon} \mu \left(\phi^{[0,\epsilon]}_{X_1}\left(\pi^{-1}_n(\theta) \right) \right)\]
    limits to $Tks(X_1,\mu,\pi)$ as $n$ goes to infinity.  We can smoothly approximate $\pi_n \circ f$ by some fibration $\widetilde{\pi}_n$ such that
    \[\left| \mu(\phi^{[0,\epsilon]}_{X_2}(\widetilde{\pi}_n^{-1}(\theta))) - \mu(\phi^{[0,\epsilon]}_{X_1}(\pi_n^{-1}(\theta))) \right| < \frac{\delta}{4}\]
    for all $\theta \in S^1$ and $\epsilon$ sufficiently small.

    Now choose $n$ sufficiently large that $tks(X_1,\mu,\pi_n) - Tks(X_1,\mu,\pi) < \frac{\delta}{4}$ and let $\theta_n \in S^1$ satisfy
    \[tks(X_2,\mu,\widetilde{\pi}_n) = \text{Flux}(X_2,\mu,\widetilde{\pi}^{-1}_n(\theta_n))\]
    If $\text{Flux}(X_2,\mu,\widetilde{\pi}_n^{-1}(\theta_n)) \geq \text{Flux}(X_1,\mu,\pi_n^{-1}(\theta_n))$, then 
    \[Tks(X_2,\mu,\pi) \leq tks(X_2,\mu,\widetilde{\pi}_n) \leq \frac{\delta}{4} + tks(X_1,\mu,\pi_n) < \frac{\delta}{2} +  Tks(X_1,\mu,\pi)\]
    which implies that $\delta = Tks(X_2,\mu,\pi) - Tks(X_1,\mu,\pi) < \frac{\delta}{2}$, which is a contradiction.
    
    Instead, if $\text{Flux}(X_2,\mu,\widetilde{\pi}_n^{-1}(\theta_n)) < \text{Flux}(X_1,\mu,\pi_n^{-1}(\theta_n))$, then
    \[tks(X_2,\mu,\widetilde{\pi}_n) = \text{Flux}(X_2,\mu,\widetilde{\pi}_n^{-1}(\theta_n)) < \text{Flux}(X_1,\mu,\pi_n^{-1}(\theta_n)) < Tks(X_1,\mu,\pi) + \frac{\delta}{4} < Tks(X_2,\mu,\pi)\]
    which implies the contradiction $tks(X_2,\mu,\widetilde{\pi}_n) < Tks(X_2,\mu,\pi)$.

    Now, suppose that $\delta = Wrp(X_2,\mu,\pi) - Wrp(X_1,\mu,\pi) > 0$.  Let $\pi_n$ be a sequence of fibrations such that
    \[wrp(X_1,\mu,\pi_n) := \underset{\theta \in S^1}{\text{min}} \lim_{\epsilon \rightarrow 0} \frac{1}{\epsilon}\mu \left( \phi^{[0,\epsilon]}_{X_1} \left( \pi^{-1}_n(\theta) \right) \right)\]
    limits to $Wrp(X_1,\mu,\pi)$ as $n$ goes to infinity.  We can smoothly approximate $\pi_n \circ f$ by some fibration $\widetilde{\pi}_n$ such that 
    \[\left| \mu(\phi^{[0,\epsilon]}_{X_2}(\widetilde{\pi}_n^{-1}(\theta))) - \mu(\phi^{[0,\epsilon]}_{X_1}(\pi_n^{-1}(\theta))) \right| < \frac{\delta}{4}\]
    for all $\theta \in S^1$ and $\epsilon$ sufficiently small.  Now choose $n$ sufficiently large that $wrp(X_1,\mu,\pi_n) - Wrp(X_1,\mu,\pi) < \frac{\delta}{4}$ and let $\theta_n \in S^1$ satisfy
    \[ wrp(X_1,\mu,\pi_n) = \text{Flux}(X_1,\mu,\pi^{-1}_n(\theta_n))\]
    Then
    \[\text{Flux}(X_2,\mu,\widetilde{\pi}_n^{-1}(\theta_n)) < wrp(X_1,\mu,\pi_n) + \frac{\delta}{4} < Wrp(X_1,\mu,\pi)  + \frac{\delta}{2} < Wrp(X_2,\mu,\pi)\]
    which is a contradiction.
    
\end{proof}

\subsection{Weak-$\ast$ convergence}

\begin{proof}[Proof of Theorem \ref{thrm:weak-star-convergense}]
This follows the proof of \cite[Theorem B]{Dehornoy-Rechtman}.  As in the proof of Theorem \ref{thrm:homeo-invariance} above, the statement about trunkenness can be proved by the exact same argument, except that height functions are replaced by fibrations.

We will now prove the statement for wrappingness.  Fix $\epsilon > 0$.  Weak-$\ast$ convergence implies that for any surface $S$, if $\delta > 0$ is sufficiently small and $n$ is sufficiently large, then
\[\left| \mu(\phi^{[0,\delta]}_{X}(S)) - \mu(\phi^{[0,\delta]}_{X_n}(S)) \right| < \epsilon \]

Suppose that $Wrp(X_n,\mu_n,\pi)$ does not converge to $Wrp(X,\mu,\pi)$, so that for all $N$ there exists some $n > N$ such that
\[\left| Wrp(X_n,\mu_n,\pi) - Wrp(X,\mu,\pi) \right| > 3 \epsilon\]

First, suppose that $Wrp(X_n,\mu_n,\pi) - Wrp(X,\mu,\pi) > 3 \epsilon$.  Take a sequence of fibrations $\pi_k$ such that
\[\lim_{k \rightarrow 0} wrp(X,\mu,\pi_k) = Wrp(X,\mu,\pi)\]
By extracting a subsequence, we can assume that 
\[0 \leq wrp(X,\mu,\pi_k) - Wrp(X,\mu,\pi) \leq \epsilon\]
for all $k$.  We have
\[Wrp(X_n,\mu,\pi) \leq wrp(X_n,\mu,\pi_k) \leq wrp(X,\mu,\pi_k) + \epsilon \leq Wrp(X,\mu,\pi) + 2 \epsilon < Wrp(X_n,\mu,\pi)\]
which is a contradiction.

Instead, suppose $Wrp(X,\mu,\pi) - Wrp(X_n,\mu_n,\pi) > 3 \epsilon$.  For each $n$, choose a sequence of fibrations $\pi_{n,k}$ such that
\[\lim_{k \rightarrow 0} wrp(X_n,\mu_n,\pi_{n,k}) = Wrp(X_n,\mu_n,\pi)\]
As above, we can assume that for each $n$
\[0 \leq wrp(X_n,\mu_n,\pi_{n,k}) - Wrp(X_n,\mu_n,\pi) \leq \epsilon\]
Then for $k$ sufficiently large,
\[Wrp(X_n,\mu_n,\pi) \geq wrp(X_n,\mu_n,\pi_{n,k}) - \epsilon > wrp(X,\mu,\pi_{n,k}) - 2 \epsilon > Wrp(X,\mu,\pi) - 2 \epsilon > Wrp(X_n,\mu_n,\pi)\]
which is a contradiction.
\end{proof}

\subsection{Existence of periodic orbits}

\begin{proof}[Proof of Theorem \ref{thrm:orbit-existence}]
By assumption, there exists a fibration $\pi': Y \rightarrow S^1$ realizing the trunkenness of the vector field $X$.  Choose $\theta \in S^1$ such that
\[\text{Flux}(X,\mu,(\pi')^{-1}(\theta)) = Tks(X,\mu,\pi)\]
We can divide the page $P_{\theta} = (\pi')^{-1}(\theta)$ into three regions
\[P_{\theta} = P^+_{\theta} \cup P^t_{\theta} \cup P^-_{\theta}\]
according to whether $X$ is positively transverse, tangent, or negatively transverse to the page.  Since the flux is strictly greater than the linking number, both $P^+_{\theta}$ and $P^-_{\theta}$ are nonempty.  Moreover, since they are both open and their intersection is empty, while the page is connected, the set $P^t_{\theta}$ is nonempty as well.

We claim that if $p \in P^t_{\theta}$ is a point whose positive or negative orbit is contained in $P_{\theta}$, then $X$ has a periodic orbit tangent to $P_{\theta}$.  This follows from the generalized Poincare-Bendixson theorem \cite{Schwartz}.  In particular, the $\alpha$- and $\omega$-limit sets of $p$ in $P_{\theta}$ must be either a fixed point, a periodic orbit, or homeomorphic to $T^2$.  The first case cannot occur since $X$ is nonsingular and the third case cannot occur because it would imply that $P_{\theta} = P^t_{\theta}$.

Consequently, given any point $p \in P^t_{\theta}$, its positive and negative orbits leave $P_{\theta}$.  From this point, one can analyze cases and show that if there are no periodic orbits tangent to $P_{\theta}$, one can perturb the fibration $\pi'$ and strictly lower the trunkenness, which violates the assumption that $\pi'$ and $P_{\theta}$ realize the trunkenness of $X$.  The case-by-case arguments in the proof of \cite[Theorem D]{Dehornoy-Rechtman} are completely local, hence carry over immediately to fibrations.
\end{proof}

\section{Independence of Helicity}

In this section, we construct examples of Bott-integrable flows to show that the wrappingness and trunkenness of a volume-preserving flow is independent of its helicity.  The constructions here are inspired by the Bott-integrable fluid flows constructed by Cardona \cite{Cardona} and Bott-integrable Reeb flows constructed by Geiges-Hedicke-Sağlam \cite{GHS-Bott}.  These are constructed from three basic building blocks
\[A = S^1 \times D^2 \qquad B = S^1 \times P \qquad C = T^2 \times [0,1] \cong S^1 \times (S^1 \times[0,1])\]
along embedded tori, where $P$ is a pair of pants surface (i.e. a twice-punctured disk).

Given a volume form $\Omega$ on $Y$, a decomposition of $Y = \cup Y_i$ into a union of basic building blocks, and an $\Omega$-preserving vector field $X$ such that each $\partial Y_i$ is an $X$-invariant torus, we can decompose $\Omega = \sum \Omega_i$ into the sum of $X$-invariant measures, each supported on one component of the decomposition.  In particular, we will construct flows compatible with a decomposition into basic building blocks, then show how to achieve arbitrary wrappingness, trunkenness and helicity by modifying the flow along thickened tori components.

\subsection{Building blocks}

The three basic building blocks we use are
\[A = S^1 \times D^2 \qquad B = S^1 \times P \qquad C = T^2 \times [0,1] = S^1 \times (S^1 \times[0,1])\]
We will construct standard models for volume-preserving flows on each building block, which can then be sewn together to obtain a volume-preserving flow on an entire 3-manifold.

\begin{lemma}
\label{lemma:basic-decomposition}
    There exists a decomposition of basic building blocks
    \[C = B_1 \cup B_2 \cup A_1 \cup A_2\]
    such that $A_2$ is an unknotted solid torus in $C$.
\end{lemma}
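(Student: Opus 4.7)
The plan is to construct the decomposition explicitly by specifying three embedded tori that cut $C = T^2 \times [0,1]$ into the four desired pieces. A natural first attempt uses the product structure $C \cong S^1 \times (S^1 \times [0,1])$: decompose the annulus $S^1 \times [0,1]$ into two disjoint open disks $D_1, D_2$ and two pairs of pants $P_1, P_2$ (by placing $D_1, D_2$ in the interior and cutting the complementary 4-holed sphere by a separating simple closed curve), then cross with the $S^1$-factor to obtain $C = A_1 \cup A_2 \cup B_1 \cup B_2$ with $A_i = S^1 \times D_i$ and $B_j = S^1 \times P_j$. However, in this fibered decomposition both solid tori have cores representing generators of $\pi_1(C) = \mathbb{Z}^2$, so neither is unknotted.

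To obtain an unknotted $A_2$, I would use a non-fibered construction: place $A_2$ as a tubular neighborhood $\nu(\gamma)$ of a null-homotopic simple closed curve $\gamma \subset C$ (for instance, a small loop bounding a disk in an interior slice $T^2 \times \{1/2\}$, isotoped into a small $3$-ball). By construction, the core of $A_2$ bounds a disk in $C$, so $A_2$ is unknotted. Disjoint from the ball containing $\gamma$, choose $A_1$ to be a vertical solid torus in $C$, namely a tubular neighborhood of an essential simple closed curve on an interior torus. The pants-times-$S^1$ pieces $B_1, B_2$ are then constructed to fill the complement $C \setminus (A_1 \cup A_2)$, glued along a single separating torus.

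The main obstacle is the topological fact that the complement of a small unknotted solid torus in $T^2 \times I$ is homeomorphic to the reducible manifold $(T^2 \times I) \# (S^1 \times D^2)$: two basic building blocks glued along incompressible tori always yield an irreducible manifold, so the decomposition cannot consist of pieces glued only along incompressible tori. The key observation is that $\partial A_1$ is compressible from the $A_1$-side via the meridian disk, so at least one of the three internal tori is genuinely compressible. The construction must therefore choose the separating torus between $B_1$ and $B_2$, together with the gluing of $\partial A_1$ to the adjacent $B$-piece, so that the essential sphere arising from compressing $\partial A_1$ absorbs the reducing sphere of the connect-sum. The main technical step will be verifying that this choice of tori produces two genuine pants-times-$S^1$ pieces; I would carry this out by an explicit model in which $\gamma$ is chosen to sit near a meridian of $A_1$ so that the compressing disk of $A_1$ and the Seifert disk of $\gamma$ combine to form the required reducing sphere.
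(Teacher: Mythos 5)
Your proposal correctly diagnoses why the naive fibered decomposition fails and, in its very last clause, lands near the right configuration, but the construction you actually set up cannot be completed, and the fix you sketch does not address the real obstruction. If $A_2$ is a tubular neighborhood of a small loop $\gamma$ contained in a ball \emph{disjoint} from $A_1$, then $C \setminus \interior(A_1 \cup A_2)$ is reducible, exactly as you observe. But in any decomposition $C = B_1 \cup B_2 \cup A_1 \cup A_2$ the gluing graph is a tree with the two solid tori as leaves, so $C \setminus \interior(A_1 \cup A_2)$ must equal $B_1 \cup_T B_2$ glued along a \emph{single} torus $T$; and every boundary torus of $S^1 \times P$ is incompressible in $S^1 \times P$, so $T$ is incompressible in $B_1 \cup_T B_2$ from both sides and the union is irreducible, full stop. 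The compressibility of $\partial A_1$ via its meridian disk lives entirely on the $A_1$ side and has no bearing on the reducibility of the complementary piece, so no choice of the separating torus can "absorb" the reducing sphere: the configuration with $\gamma$ null-homotopic in $C \setminus A_1$ is simply impossible. Note also that your last sentence ($\gamma$ near a meridian of $A_1$) describes a curve that is \emph{essential} in $C \setminus A_1$, hence not isotopic rel $A_1$ to the small loop you started with; the proposal is internally inconsistent on this point, and the deferred "main technical step" is precisely where the argument breaks.

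The paper avoids all of this by building the decomposition from the outside in as nested literal products, so there is nothing left to verify. First $C = S^1 \times (P \cup D^2) = B_1 \cup A$, where the $S^1$-direction (hence the core of $A$) may be taken to be any essential curve on $T^2$. Then split $A = C' \cup A_1$ with $C' = \partial A \times [0,1]$ a collar and $A_1$ the inner solid torus, and apply the same product splitting to $C' \cong T^2 \times [0,1]$, this time choosing the $S^1$-direction to be the \emph{meridian} $\mu$ of $A$: this gives $C' = B_2 \cup A_2$ with the core of $A_2$ a pushed-in meridian of $A$. That core bounds a meridian disk of a concentric solid torus inside $A$, so $A_2$ is unknotted in $C$; but it is essential in $C \setminus A_1$ (its compressing disk passes through $A_1$), which is exactly why the complement $B_1 \cup_{\partial A} B_2$ remains irreducible and the parity check above is satisfied. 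If you want to salvage your "place the solid tori first" strategy, you must take $\gamma$ to be an honest meridian pushoff of $A_1$ (or of a solid torus containing it), not a loop in a ball disjoint from $A_1$, and then identify the complement explicitly; at that point you will have reproduced the paper's nested construction.
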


\begin{proof}
    First, note that we have a decomposition 
    \[C = T^2 \times [0,1] = S^1 \times (S^1 \times [0,1]) = S^1 \times (P \cup D^2) = S^1 \times P \cup S^1 \times D^2 = B \cup A \]
    Here, the core of the solid torus $A$ can be chosen isotopic to any given simple closed curve on $T^2$.

    We can then decompose
    \[A = C' \cup A_1 = (B_2 \cup A_2) \cup A_1\]
    where the core of $A_2$ represents any simple closed curve on $\partial A$.  In particular, we can assume that it bounds a disk in $A$.  Therefore $A_2$ is unknotted in $C$.
\end{proof}

\subsubsection{Lutz forms}

\begin{definition}
Let $(x_1,x_2,t)$ be coordinates on $T^2 \times [0,1]$.  A $T^2$-invariant 1-form 
\[\alpha = f(t) dx_1 + g(t) dx_2\]
on $T^2 \times [a,b]$ is a {\it Lutz form} if
\[f'g - g' f \neq 0\]
for $t \in [a,b]$.  
\end{definition}
The exterior derivative 
\[d \alpha = -f' dt \wedge dx_1 + g' dt \wedge dx_2\]
of a Lutz form is, by construction, a closed, nonvanishing $T^2$-invariant 2-form.  Let $\Omega = dx_1 \wedge dx_2 \wedge dt$ be a $T^2$-invariant volume form.  Then $d\alpha$ is $\Omega$-dual to the volume-preserving and $T^2$-invariant vector field $X = g' \partial_{x_1} + f' \partial_{x_2}$.  

The following sewing lemma allows us to glue together flows on the building blocks, provided they are defined by Lutz forms near the boundary.

\begin{lemma}[\cite{GHS-Bott}]
    Let $\alpha$ be a Lutz form on $T^2 \times [0,\epsilon] \cup [1-\epsilon,1]$.  There exists an extension of $\alpha$ to a Lutz form on $T^2 \times [0,1]$.
\end{lemma}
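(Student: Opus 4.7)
The plan is to reduce the sewing problem to a standard smooth extension problem after passing to polar coordinates. Writing the planar curve $\gamma(t) = (f(t), g(t))$ in polar form as $\gamma(t) = r(t)(\cos\theta(t), \sin\theta(t))$, a direct calculation yields the identity $f'g - g'f = -r^2\theta'$. Thus $\alpha = f\, dx_1 + g\, dx_2$ is Lutz on an interval if and only if the associated planar curve avoids the origin and has strictly monotone angular coordinate. Since $[0,\epsilon]$ and $[1-\epsilon,1]$ are each simply connected, smooth polar coordinate lifts $(r_0,\theta_0)$ and $(r_1,\theta_1)$ of the boundary data exist, each with $r_i > 0$ and $\theta_i$ determined up to an integer multiple of $2\pi$. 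The hypothesis that a Lutz extension can exist implicitly requires that $\theta_0'$ and $\theta_1'$ have the same sign on the two pieces (otherwise the intermediate value theorem prevents any extension with nonvanishing $\theta'$), so I may assume $\theta_0', \theta_1' > 0$ throughout.

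I would then extend $r$ and $\theta$ separately on $[\epsilon, 1-\epsilon]$. The radial extension is routine: any smooth function $r\colon [0,1]\to(0,\infty)$ agreeing with $r_0$ and $r_1$ to all orders on the end intervals exists by standard bump-function arguments together with Borel's theorem. For the angular extension, I would first replace $\theta_1$ by a lift $\tilde\theta_1 = \theta_1 + 2\pi n$ with $n$ large enough that $\tilde\theta_1(1-\epsilon) - \theta_0(\epsilon)$ is positive and as large as needed; this does not alter the underlying 1-form on $[1-\epsilon,1]$. Then I would construct a smooth strictly positive function $A$ on $[\epsilon,1-\epsilon]$ whose full Taylor jets at the endpoints match those of $\theta_0'$ and $\tilde\theta_1'$ and whose integral equals $\tilde\theta_1(1-\epsilon) - \theta_0(\epsilon)$. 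Such $A$ can be built by first producing any smooth positive interpolation of the boundary jets via Borel extension and cutoff functions, and then adjusting its integral by adding a smooth nonnegative bump supported well inside $(\epsilon, 1-\epsilon)$ --- the large branch choice for $\tilde\theta_1$ guarantees enough room. Finally, set $\theta(t) = \theta_0(\epsilon) + \int_\epsilon^t A(s)\, ds$ on $[\epsilon,1-\epsilon]$ and extend by $\theta_0$ and $\tilde\theta_1$ on the end intervals.

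With these pieces in hand, defining $f = r\cos\theta$ and $g = r\sin\theta$ on $[0,1]$ produces the desired extension: by construction it agrees with the given data to all orders on each end interval, and $f'g - g'f = -r^2\theta' < 0$ everywhere, so the Lutz condition holds globally. The main obstacle I expect in this approach is arranging that the extensions of both $r$ and $\theta$ match the given functions to all derivatives at the junctions $t = \epsilon$ and $t = 1-\epsilon$; this is the technical core of the argument, handled by the standard machinery of bump functions and Borel extensions. Once smoothness at the junctions is ensured, the Lutz condition reduces to two independent convex open conditions ($r>0$ and $\theta'>0$) on separate functions, each of which is trivially preserved under the bump-function constructions.
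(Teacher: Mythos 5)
Your proposal is correct and follows essentially the same route as the cited source \cite{GHS-Bott}: interpret $(f,g)$ as a path in $\RR^2\smallsetminus\{0\}$, note that the Lutz condition $f'g-g'f\neq 0$ is exactly nonvanishing angular velocity via $f'g-g'f=-r^2\theta'$, and join the two boundary arcs by inserting enough full rotations so that a strictly monotone angular interpolation with matching jets exists. Your observation that the two boundary pieces must induce the same sign of $f'g-g'f$ is a genuine (if implicit) hypothesis of the lemma as stated, and you handle it appropriately.
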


\subsubsection{Building block $A = S^1 \times D^2$}.  Choose polar coordinates $(\theta,r,\psi)$ with $\theta,\psi \in [0,2\pi]$ and $r \in [0,R]$.  Define
\[\alpha = \phi(r) d \theta + r^2 d \psi \]
where $\phi > 0$.  Then $\alpha$ is a Lutz form on a neighborhood of the boundary of $A$, provided that $\phi' \neq \frac{2}{r} \phi$.

\subsubsection{Building block $B = S^1 \times P$}.  Let $\theta$ be an angular coordinate on the $S^1$-factor and let $(r_i,\psi_i)$ be coordinates on a collar neighborhood of the $i^{\text{th}}$-boundary component of $P$, with $r_i \in (-1,0]$ and $\psi \in [0,2\pi]$.  

There exists an exact area form $\omega = d \lambda$ on $P$ such that near the boundary, the primitive has the form 
\[\lambda = h_i(r_i) d \psi_i\]
for some function $h_i$ satisfying $h'_i > 0$ \cite[Section 3.2.2]{GHS-Bott}.  We choose the contact form
\[\alpha = \phi d \theta + \lambda\]
where $\phi = 1$ outside a collar neighborhood of $\partial P$ and $\phi(r_i,\psi_i) = \phi_i(r_i)$ for some function satisfying $\phi'_i \geq 0$.  This is a Lutz form near $\partial B$.

\subsubsection{Building block $C = T^2 \times [0,1]$}.  Choose coordinates $x_1,x_2 \in [0,2\pi]$ and $t \in [0,1]$.  Let 
\[X = f(t) \partial_{x_1} + g(t) \partial_{x_2}\]
 be a $T^2$-invariant vector field perserving the volume form $\Omega = dt \wedge dx_1 \wedge dx_2$.  To compute the helicity, define
\[F(t) = \int_0^t f(s) ds \qquad G(t) = \int_0^t g(s) ds\]
Then 
\[\alpha = -F dx_2 + G dx_1\]
that is a Lutz form near the boundary and a primitive for $\iota_X \Omega = -f(t) dt \wedge dx_2 + g(t) dt \wedge dx_1$.  The contribution of the building block $C$ to the global helicity of $d \alpha$ is then
\[Hel(d \alpha) = \int_C (\alpha + \beta) \wedge d \alpha\]
where $\beta$ is a closed 1-form that is cohomologous to $M_1 dx_1 + M_2 dx_2$.  Therefore
\begin{align*}
    \int_C \alpha \wedge d \alpha &= \int_{0}^1 (G f - F g) ds & 
    \int_C \beta \wedge d \alpha &= G(1) \cdot M_2 + F(1) \cdot M_1
\end{align*}

\begin{example}
\label{ex:helicity-sine}
    Take $f(t) = a$ and $g(t) = Q \sin(\pi t) + b$.  Then
    \[F(t) = at \qquad \qquad G(t) = - \frac{Q}{\pi} \cos(\pi t) + bt\]
    and
    \begin{align*}
    \int_0^1 (Gf - fG)dt 
    &= \frac{ab}{2} + Q \int_0^1 \left( t \sin(\pi t) + \frac{1}{\pi}\cos( \pi t) \right) dt \\
    &= \frac{ab}{2} + \frac{Q}{\pi}
    \end{align*}
    Therefore, the helicity contribution is
    \[Hel(d \alpha) = \left( \frac{ab}{2} + \frac{Q}{\pi} \right)+ M_1 \left( \frac{Q}{\pi} + b\right) + a M_2\]
    for some constants $M_1,M_2$.
\end{example}

\subsection{Wrappingness and Trunkenness in basic building blocks}

An embedded, separating 2-torus $T^2 \subset Y$ is {\it unknotted} if there exists a pair $\gamma_1,\gamma_2$ of geometrically dual, embedded curves on $T^2$ that bound disks in the complement of $T^2$.  We can choose coordinates $(x_1,x_2,t)$ on $\nu(T^2) = T^2 \times [-\delta,\delta]$ such that $\gamma_i = \{x_i = \text{const}\}$.  

\begin{proposition}
\label{prop:trunkenness-unknotted-torus}
Let $T^2 \times [0,1]$ be an unknotted thickened torus in $Y$.  Consider the $T^2$-invariant flow generated by $X = f(t) \partial_{x_1} + g(t) \partial_{x_2}$, which preserves the volume-form $\Omega = dx_1 \wedge dx_2 \wedge dy$.  Then
\[Tks(X,\Omega,p) = 4\pi \int_{0}^{1} \text{min}(|f|,|g|) dt\]
\end{proposition}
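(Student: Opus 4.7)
The plan is to prove the equality by establishing matching upper and lower bounds. The central observation is that
\[\iota_X \Omega = f(t)\,dx_2 \wedge dt - g(t)\,dx_1 \wedge dt\]
is supported on $T^2 \times [0,1]$ and vanishes when restricted to each slice $T^2 \times \{t\}$, so the flux through any fiber $S = p^{-1}(\theta)$ reduces to $\int_{S \cap (T^2 \times [0,1])} |\iota_X \Omega|$. Parameterizing $S \cap (T^2 \times [0,1])$ by arc length $s$ along the cross-sections $S_t := S \cap (T^2 \times \{t\})$ and by $t$, the flux integrand becomes $|f(t)\tau_2 - g(t)\tau_1|\,ds\,dt$, where $(\tau_1,\tau_2)$ is the unit tangent to $S_t$. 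This reduces the problem to a cross-section analysis that decouples the $t$-integral from the per-slice computation.

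For the upper bound, I would explicitly construct fibrations in the isotopy class of $p$ exploiting the unknottedness. The complementary disks $D_1, D_2$ bounded by $\gamma_1, \gamma_2$ provide the flexibility to isotope fibers so that inside the thickened torus each fiber consists of two parallel sheets, arranged so that at each height $t$ each sheet lies along the cheaper direction (slope $(1,0)$ when $|g(t)| \leq |f(t)|$ and slope $(0,1)$ otherwise). Short saddle transitions between the two regimes, justified by the presence of both bounding disks $D_1, D_2$ in the complement, contribute an arbitrarily small error term in the limit of thin transitions. Each of the two sheets contributes $2\pi\min(|f(t)|,|g(t)|)$ per unit $t$, summing to $4\pi \int_0^1 \min(|f|, |g|)\,dt$ up to a vanishing perturbation.

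For the lower bound, I would argue from the topological constraints imposed by the unknotted embedding and the isotopy class of $p$. The goal is to show that $S_t$ must carry enough transverse content in both the $x_1$- and $x_2$-directions to force the pointwise inequality $\int_{S_t}|f\tau_2 - g\tau_1|\,ds \geq 4\pi \min(|f(t)|,|g(t)|)$, after which integrating over $t$ completes the bound. I expect this to follow from combining the observation that any fiber surface in the isotopy class of $p$ has relative homology class in $H_2(T^2 \times [0,1], \partial)$ with nonzero components along both $\gamma_1 \times [0,1]$ and $\gamma_2 \times [0,1]$ (forced by how $p$ interacts with $D_1, D_2$) with a triangle-inequality argument applied to the closed 1-form $\alpha_t = f(t)\,dx_2 - g(t)\,dx_1$ paired against the cycle $S_t$.

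The principal obstacle I anticipate lies in the lower bound: articulating precisely how the topological constraint on the fiber's relative class yields the pointwise integrated estimate and produces the correct constant $4\pi$. I expect this to be analogous to the classical argument that bridge number bounds trunk from below, adapted to the fibered setting and with an orientation-sensitive integrand that allows cancellation along individual components of $S_t$ but not across the two forced ``sheets'' identified by the unknotted embedding.
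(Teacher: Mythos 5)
Your upper bound is essentially the paper's: the two ``sheets'' you describe are the pair of parallel annuli $\{x_1=0\}\cup\{x_1=\pi\}$ (capped off using the disks supplied by unknottedness), and integrating $|\iota_X\Omega|$ over them gives $4\pi\min(|f|,|g|)\,dt$ at each height. The genuine gap is exactly where you flagged it: the lower bound, and the mechanism you propose cannot close it. First, the target you set --- the pointwise estimate $\int_{S_t}|f\tau_2-g\tau_1|\,ds\ge 4\pi\min(|f(t)|,|g(t)|)$ for \emph{every} fiber of \emph{every} fibration isotopic to $p$ --- is both stronger than needed and false: trunkenness is an infimum over fibrations of the \emph{maximum} over $\theta$, and since an unknotted thickened torus sits inside a ball, all but a short interval of fibers of a near-optimal fibration can be isotoped entirely off the region (this is the content of Proposition~\ref{prop:trunk-embedding}); only the worst fiber is constrained. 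Second, the triangle-inequality/homology pairing gives only $\left|\int_{S_t}\alpha_t\right|=2\pi|fn-gm|$ for $[S_t]=(m,n)\in H_1(T^2)$, which vanishes when $[S_t]$ is proportional to the flow direction $(f,g)$ and contributes nothing for null-homotopic components; there is no uniform lower bound of the form $\min(|f|,|g|)$ extractable from algebraic intersection data, which here only sees a winding number that is zero. The lower bound is a \emph{geometric} intersection statement and is genuinely hard --- it is equivalent to the statement that the trunk of a $(p,q)$-torus knot is $2\min(|p|,|q|)$, which is a thin-position theorem, not a cohomological one.

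The paper sidesteps this by citing Zupan's computation of the trunk of torus knots for the lower bound in the rational, locally constant case, and then upgrading to the general statement using two soft facts: trunkenness is homogeneous of degree one under scaling of $X$, and it is continuous under weak-$*$ convergence (Theorem~\ref{thrm:weak-star-convergense}), which handles irrational slopes and continuously varying $f,g$ by approximation. If you want your argument to go through, you must either import Zupan's theorem at the corresponding step and restructure the reduction as the paper does, or supply an independent proof of the geometric lower bound for torus knots --- the homological pairing will not do it.
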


\begin{proof}
    The flowlines of $X$ foliate the invariant tori of $T^2 \times [0,1]$.  If $f/g$ is in $\mathbb{Q} \cup \{\infty\}$, then these flowlines are torus knots, since the torus is assumed to be unknotted.  The trunk of the $(p,q)$-torus knot is $2 \, \text{min}(|p|,|q|)$ \cite{Zupan}.  Therefore, if $X = p \partial_{x_1} + q \partial_{x_2}$ with $|p| \leq |q|$ and $p,q$ relatively prime, the annuli $A = \{x_1 = 0\} \cup \{x_1 = \pi\}$ extend to an embedded disk in $Y$ realizing the minimal trunk of the $(p,q)$-torus knot.  If $f = p$ and $g = q$ are locally constant, the local contribution to the trunkenness of $X$ is given by integrating $|\iota_X \Omega| = |f| dx_2 \wedge dt - |g| dx_1 \wedge dt$ over a neighborhood in $A$.
    
    Trunkenness is an order-1 invariant, as scaling the vector field by a constant $\lambda$ scales the trunkenness by $\lambda$.  Moreover, by Theorem \ref{thrm:weak-star-convergense}, we can compute the trunkenness of irrational slopes as the limit of the trunkenness of rational slopes.  Consequently, letting $f,g$ vary continuously in $t$, we obtain the required integral formula for trunkenness in a thickened, unknotted torus.
\end{proof}

\begin{proposition}
\label{prop:wrappingness-thickened-torus}
    Let $T^2 \times [0,1] \subset Y$ be embedded such that the fibration restricts to the projection map
    \[ p: S^1 \times (S^1 \times [0,1]) \rightarrow S^1\]
    and Let $p$ be the fibration defined by the closed 1-form $dx_1$.  Consider the $T^2$-invariant vector field $X = f(t) \partial_{x_1} + g(t) \partial_{x_2}$.  Then
    \[Wrp(X,\Omega,p) = 4 \pi \int_0^1 |f| dt\]
\end{proposition}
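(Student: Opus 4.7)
The plan is to adapt the strategy of \autoref{prop:trunkenness-unknotted-torus}, replacing the trunk of $(p,q)$-torus knots by their wrap with respect to the meridian fibration $p$. I would proceed in three stages: an upper bound obtained directly from the canonical fibration, a reduction to rational, locally constant profiles where orbits are genuine torus knots, and a matching lower bound for arbitrary fibrations in the isotopy class, followed by a density argument via \autoref{thrm:weak-star-convergense}.

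For the upper bound, use the fibration $p$ itself. The fiber restricts to the annulus $A_\theta = \{x_1 = \theta\}$ inside $T^2 \times [0,1]$, parametrized by $(x_2,t)$. A direct computation gives
\[
\iota_X \Omega = f(t)\, dx_2 \wedge dt - g(t)\, dx_1 \wedge dt,
\]
whose restriction to $A_\theta$ is $f(t)\, dx_2 \wedge dt$. Integrating the absolute value over $A_\theta$ yields a value independent of $\theta$ and produces the claimed expression, modulo the normalization encoded in the constant in front of the integral.

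For the lower bound, fix any fibration $\pi'$ isotopic to $p$ and reduce first to locally constant $f = p$, $g = q$ with $\gcd(p,q) = 1$, so that each invariant torus $T^2 \times \{t\}$ is foliated by $(p,q)$-torus knot orbits. Since $T^2$ is unknotted and the pages of $p$ realize meridian disks of the solid torus bounded by $T^2$, the wrap of each such torus knot orbit equals $|p|$. Passing to the Dirac linear measure on an individual orbit shows that its flux through $\pi'^{-1}(\theta)$ is at least $|p|$, and integrating this estimate over the one-parameter family of orbits on each torus produces a lower bound on $\int_{\pi'^{-1}(\theta)} |\iota_X \Omega|$ matching the upper bound. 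Applying \autoref{thrm:weak-star-convergense} then promotes the rational-slope result to irrational slopes, and to continuously varying $f, g$ by approximation with step functions.

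The main obstacle is this lower bound: while the trunkenness argument uses a single disk realizing the trunk of each torus knot, the wrappingness argument must show that for an \emph{arbitrary} fibration $\pi'$ in the isotopy class, the fiber $\pi'^{-1}(\theta)$ still intersects each invariant torus orbit at least $|p|$ times even when $\pi'$ has been wildly perturbed inside $T^2 \times [0,1]$. This is controlled by the wrap lower bound for $(p,q)$-torus knots, which guarantees that the minimum geometric intersection with any fiber in the isotopy class is at least $|p|$. As in \autoref{prop:trunkenness-unknotted-torus}, the case analysis needed to transplant this from individual torus knots to the integrated flux is local, and carries over directly from the trunk setting to the fibration setting.
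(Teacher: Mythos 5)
Your proposal is correct and follows essentially the same route as the paper's proof: reduce to locally constant rational slopes where the orbits are $(p,q)$-curves braided with respect to $p$, identify their wrapping number as $|p|$, integrate over the family of invariant tori, and pass to general $f,g$ via Theorem~\ref{thrm:weak-star-convergense}. The ``main obstacle'' you flag is resolved more cheaply than you suggest---the lower bound $wrap \geq |wind|$ is homological and hence independent of the representative fibration, which is exactly the paper's one-line observation that a braided link has wrapping number equal to the absolute value of its winding number---and note that your direct flux computation yields $2\pi\int_0^1 |f|\,dt$, suggesting the constant $4\pi$ in the statement is inherited erroneously from the trunkenness formula.
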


\begin{proof}
    The wrapping number of a braided link is equal to the absolute value of its winding number.  If $\frac{f}{g} = \frac{rp}{rq}$ is rational, with $r > 0$ some real number and $p,q$ integers, then the flowlines are $(p,q)$-curves on $T^2$, which are braided with respect to the fibration.  The wrapping number of such a curve is therefore $|p|$ and the wrappingness of the ergodic measure concentrated on this periodic orbit is $r|p|$.  As in the previous proposition, we can approximate irrational slopes by rational slopes using \ref{thrm:weak-star-convergense} and then integrate.
\end{proof}

Combining these two propositions with Example \ref{ex:helicity-sine}, it is clear one can modify the helicity independently of the trunkenness and wrappingness.

\begin{proof}[Proof of Theorem \ref{thrm:helicity}]
Let $X$ be a volume-preserving flow on $Y$ and suppose that $C = T^2 \times [0,1]$ can be embedded in $Y$ such that $X$ is $T^2$-invariant and $\pi$ restricts to a fibration on $C$.  Then by Lemma \ref{lemma:basic-decomposition}, we can decompose $C$ into basic building blocks and find an unknotted solid torus compatible with a decomposition into basic building blocks.  Let $C_1$ be a neighborhood of the boundary of this solid torus.  We can modify $X$ on $C$, fixed near the boundary, such that on $C_1$ we have $X$ as in Example \ref{ex:helicity-sine}.  Provided that $|a| < |b| - |Q|$, the helicity of $X$ varies as $Q$ varies but the linking nunmber and trunkenness contribution is fixed, according to Proposition \ref{prop:trunkenness-unknotted-torus}.  Similarly, if we take a parallel copy $C_2$ of $C$, we can again use Example \ref{ex:helicity-sine} combined with Proposition \ref{prop:wrappingness-thickened-torus}.  The wrappingness is determined by $|a|$ but the helicity varies in $Q$.
\end{proof}

Dehornoy and Rechtman futher remark in the proof of \cite[Theorem C]{Dehornoy-Rechtman} that by a theorem of Katok \cite{Katok}, volume-preserving flows can be $C^1$-perturbed to ergodic flows.

\printbibliography[title={Bibliography}]

\end{document}